\documentclass[reqno,11pt]{amsart}

\usepackage{epsf}
\usepackage{amssymb,graphics,graphicx,wrapfig}
\usepackage{amsmath, relsize} 
\usepackage{todonotes} 
\usepackage{amsthm} 
\usepackage{amsfonts}
\usepackage{bbm}
\usepackage{amssymb}
\usepackage{array}
\usepackage{url}
\usepackage{xcolor}
\usepackage{hyperref}
\usepackage{graphicx} 
\usepackage{caption}
\usepackage{subcaption}
\usepackage{enumitem}
\usepackage{varwidth}


\def\eps{\varepsilon}


\def\be{\begin{equation}}
\def\ee{\end{equation}}
\def\ba{\begin{align}}
\def\bm{\begin{multline}}
\def\bfig{\begin{figure}[htb]}
\def\efig{\end{figure}}

\setlength{\oddsidemargin}{0mm}
\setlength{\evensidemargin}{0mm}
\setlength{\textwidth}{150mm}
\setlength{\topmargin}{0mm}
\setlength{\textheight}{220mm}
\setcounter{secnumdepth}{2}
\numberwithin{equation}{section}
\newtheorem{theorem}{Theorem}[section]
\newtheorem{proposition}[theorem]{Proposition}
\newtheorem{lemma}[theorem]{Lemma}

\newtheorem{definition}{Definition}



\DeclareMathSymbol{\leqslant}{\mathalpha}{AMSa}{"36}
\DeclareMathSymbol{\geqslant}{\mathalpha}{AMSa}{"3E}
\DeclareMathSymbol{\doteqdot}{\mathalpha}{AMSa}{"2B}
\DeclareMathSymbol{\circlearrowright}{\mathalpha}{AMSa}{"08}
\DeclareMathSymbol{\subsetneq}{\mathalpha}{AMSb}{"28}
\DeclareMathSymbol{\supsetneq}{\mathalpha}{AMSb}{"29}
\renewcommand{\leq}{\;\leqslant\;}
\renewcommand{\geq}{\;\geqslant\;}

\newcommand{\dd}{{\rm d}}
\newcommand{\e}[1]{\,{\rm e}^{#1}\,}

\newcommand{\upchi}{\raise 2pt \hbox{$\chi$}}

\makeatletter
\makeatother
\def\writefig#1 #2 #3 {\rlap{\kern #1 truecm \raise #2 truecm
\hbox{#3}}}



\newcommand{\caC}{{\mathcal C}}

\newcommand{\caF}{{\mathcal F}}

\newcommand{\caH}{{\mathcal H}}
\newcommand{\caI}{{\mathcal I}}

\newcommand{\caL}{{\mathcal L}}

\newcommand{\caN}{{\mathcal N}}
\newcommand{\caO}{{\mathcal O}}

\newcommand{\caS}{{\mathcal S}}

\def\bbone{{\mathchoice {\rm 1\mskip-4mu l} {\rm 1\mskip-4mu l} {\rm 1\mskip-4.5mu l} {\rm 1\mskip-5mu l}}}

\newcommand{\bbE}{{\mathbb E}}

\newcommand{\bbN}{{\mathbb N}}

\newcommand{\bbP}{{\mathbb P}}

\newcommand{\bbR}{{\mathbb R}}

\newcommand{\bbZ}{{\mathbb Z}}

\newcommand{\bsgamma}{{\boldsymbol\gamma}}


\begin{document}


\title{Interacting self-avoiding polygons}

\author{Volker Betz,  Helge Sch\"afer, Lorenzo Taggi}
\address{Volker Betz \hfill\newline
\indent FB Mathematik, TU Darmstadt \hfill\newline
{\small\rm\indent http://www.mathematik.tu-darmstadt.de/$\sim$betz/} 
}
\email{betz@mathematik.tu-darmstadt.de}
\address{Helge Sch\"afer \hfill\newline
\indent FB Mathematik, TU Darmstadt \hfill\newline
{\small\rm\indent https://www.mathematik.tu-darmstadt.de/fb/personal/details/helge\_schaefer.de.jsp} 
}
\email{hschaefer@mathematik.tu-darmstadt.de}

\address{Lorenzo Taggi \hfill\newline
\indent FB Mathematik, TU Darmstadt \hfill\newline
{\small\rm\indent https://sites.google.com/site/lorenzotaggiswebpage2} 
}
\email{taggi@mathematik.tu-darmstadt.de}

\begin{abstract}
We consider a system of  self-avoiding polygons interacting through a potential that penalizes or rewards the number of mutual touchings and we provide an exact computation of the critical curve separating a regime of long polygons
from a regime of localized polygons.
Moreover, we prove the existence of a sub-region of the phase diagram where the self-avoiding polygons 
are space filling and we provide a non-trivial characterization of the regime where the polygon length
admits uniformly bounded exponential moments.
\end{abstract}

\maketitle

\section{Introduction}
\label{introduction}
Let $\Lambda_L$ be graph corresponding to a torus with vertex set  $[-L/2,L/2)^d \cap \bbZ^d$, for $d \geq 2$,
and whose edges connect nearest neighbour vertices. 
The edges are directed and any two neighbour vertices are connected by two parallel edges having opposite orientation.
A (directed) self-avoiding polygon in $\Lambda_L$ is a connected, directed subgraph of $\Lambda_L$, where each vertex has one ingoing and one outgoing edge.
This paper considers a system of random interacting self-avoiding polygons (ISAP).
This system can be seen as a model for a random polymer in a random environment which is constituted of an ensemble of 
other polymers.
Polymers in random media have been investigated in various settings   (see \cite[Chapter 12]{HollanderNotes} for a review) and are of great physical interest.
Moveover, ISAP  interpolates between two paradigmatic statistical mechanics models which behave qualitatively similar in more than three dimensions, but which have been conjectured to belong to different universality classes, allowing thus to compare them.
The first of these models is the single self-avoiding polygon and was introduced as a simple model for a  polymer, see \cite{Madras} for an overview.
The second model, {random lattice permutations} \cite{ Betz2},
 can be viewed as a toy model for Bose-Einstein condensation \cite{Feynman, Kikuchi}.
 The presence of hard-core interaction makes random lattice permutations a very difficult model to treat mathematically.
 Contrary to random lattice permutations, ISAP does not present such a rigidity and it is thus better suitable for rigorous mathematical investigation.

\section{The model of interacting self-avoiding polygons}
\label{The model}
\subsection{Definitions and results}
Let ${\rm SAP}_{x,n}$ denote the set 
of self-avoiding polygons in $\Lambda_L$ covering $n$ vertices and
containing the vertex $x \in \Lambda_L$. Note that 
${\rm SAP}_{x,n} = \emptyset$ if $n$ is odd and 
${\rm SAP}_{x,2}$ contains $2d$ polygons.
Let $\zeta$ denote the empty polygon, which has no vertices, and define  ${\rm SAP}_{x,0}  : = \{ \zeta \}$.
For any $x \in \Lambda_L,$ we define
${\rm SAP}_{x} = \bigcup_{n \geq 0} {\rm SAP}_{x,n}$. 
For $\gamma \in {\rm SAP}_x$, we write 
$\| \gamma \|$ for the number of edges in $\gamma$.

An important classical 
fact about self-avoiding polygons is the existence of the 
(dimension-dependent) {\em connective constant}: if 
${\rm SAP}(n)$ denotes the set of all self-avoiding polygons on $\mathbb{Z}^d$ covering $n$ vertices and containing the origin,
\begin{equation} \label{cc}
\mu = \lim_{n \to \infty} \big( |{\rm SAP}(n)| \big)^{1/n} 
\end{equation}
exists, and indeed  
\begin{equation}
	\label{connective constant inequality}
	|{\rm SAP}(n)| \leq 2 \, (d-1) n \mu^n
\end{equation}
for all $n \in \bbN$, see \cite{Madras}. Since $|{\rm SAP}_{x,n}| 
\leq |{\rm SAP}(n)|$, the latter inequality also holds for 
${\rm SAP}_{x,n}$. 

For given $\Lambda_L$, the probability space for the 
model of interacting self-avoiding polygons (ISAP) is $\Omega = 
\prod_{x \in \Lambda_L} {\rm SAP}_x$. For 
$\bsgamma = (\gamma_x)_{x \in \Lambda_L} \in \Omega$, we define 
the cumulative length of $\bsgamma$ as
\[
\caC(\bsgamma) = \sum_{x \in \Lambda_L} \| \gamma_x \|,
\]
and the total overlap of $\bsgamma$ as 
\[
\caI(\bsgamma) = \sum_{x \in \Lambda_L} \Big( \big( \sum_{y \in \Lambda_L}
\bbone\{x \in \gamma_y\} - 1 \big) \vee 0 \Big).
\]
In words, $\caI(\bsgamma)$ counts the total value of overlaps of (non-empty) self-avoiding polygons in the system, where an 
overlap 
at a site $x$ has value $(k-1) \vee 0$ if $k$ 
self-avoiding polygons contain $x$. The energy of a given 
$\bsgamma \in \Omega$ is defined by 
\begin{equation}
	\label{Hamiltonian alpha lambda}
	\caH_{L,\alpha,\lambda}(\bsgamma) := \alpha \caC(\bsgamma)
	+ \lambda \caI(\bsgamma) + 
	\sum_{x \in \Lambda_L: \| \gamma_x\| > 0} 
	\log \|\gamma_x\|,
\end{equation}
and the probability of $\bsgamma$ is as usual given by 
\[
\bbP_{L,\alpha,\lambda}(\bsgamma) = \frac{1}{Z(L,\alpha,\lambda)} 
\e{-\caH_{L,\alpha,\lambda}(\bsgamma)}.
\]
The reason for the sum of logarithms in $\caH_{L,\alpha,\lambda}$ 
is that this guarantees convergence of the finite volume model of 
ISAP  to the finite volume model of nearest neighbour 
spatial random permutations in the limit $\lambda \to \infty$, 
which, as discussed in the introduction, motivates a lot of the 
interest in the current model.

Our first result is an exact computation of  the critical curve separating the regime where the expected polygon length is uniformly bounded from the regime where the expected polygon length grows to infinity with $L$.
We define 
\[
\caL(\alpha,\lambda) := \limsup_{L \to \infty} \bbE_{L,\alpha,
\lambda}(\| \gamma_o \|),
\]
where  $o$ is used to denote the origin.  
\begin{figure}
\centering
\includegraphics[scale=0.42]{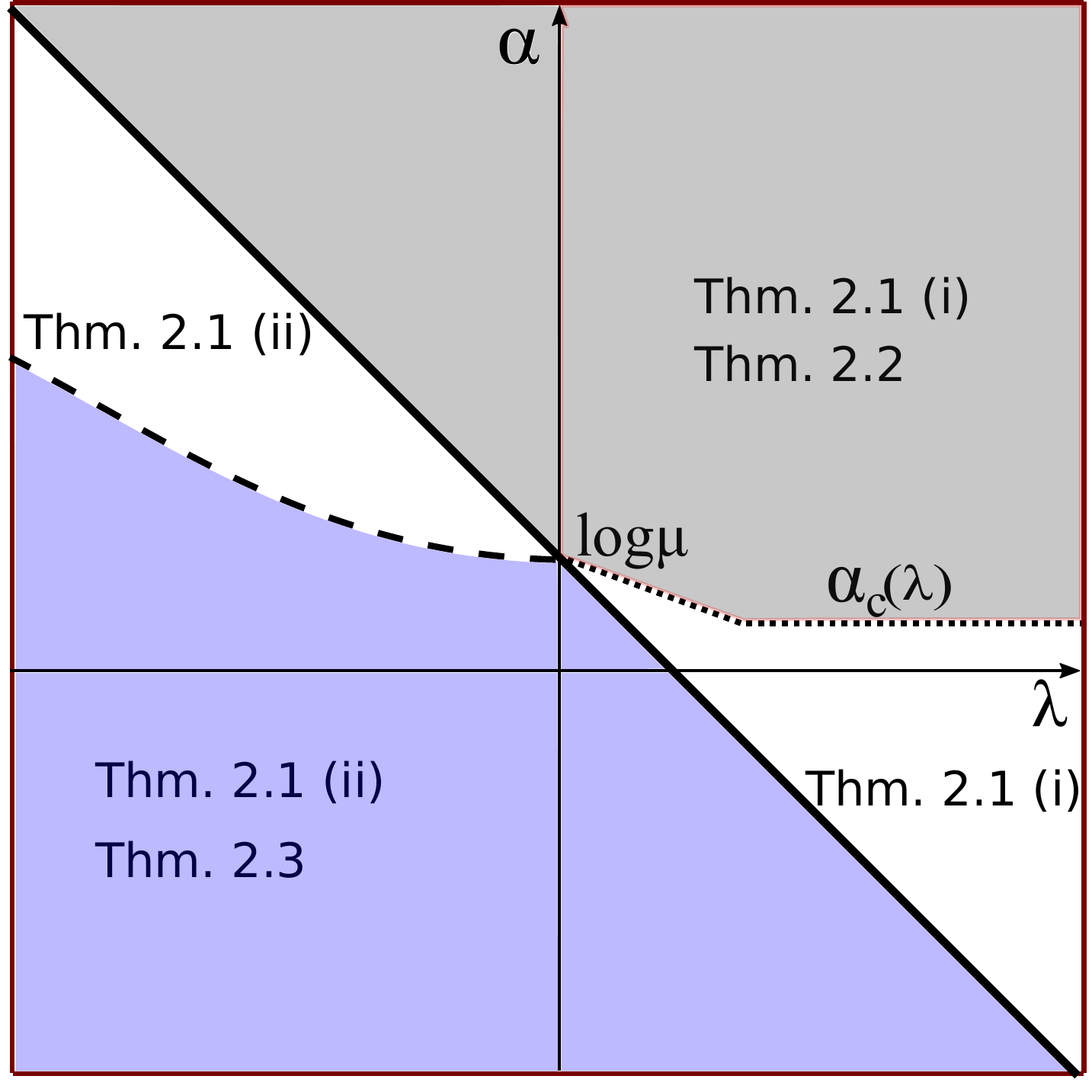}
\caption{The continuous line represents the critical curve $\alpha = \log \mu - \lambda$. The first (resp. second) statement in Theorem \ref{cumCycLen} holds in the whole region on  the right (resp. left) of the critical curve, Theorem \ref{expdecay} (resp. Theorem \ref{theo:weakly space filling}) holds in the darker region on the right (resp. left) of the critical curve.
}
\label{Fig:phasediagram}
\end{figure}
\begin{theorem} \label{cumCycLen}
Let $\mu$ denote the connective constant defined in \eqref{cc}.\\
(i): If $\alpha + \lambda > \log \mu$, then 
$\caL(\alpha,\lambda) < \infty$. \\
(ii): If $\alpha + \lambda < \log \mu$, then 
$\caL(\alpha,\lambda) = \infty$.
\end{theorem}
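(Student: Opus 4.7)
My plan hinges on rewriting $\caH$ so as to expose the natural parameter $\tilde\alpha := \alpha+\lambda$. Writing $n_x(\bsgamma) := \sum_y \bbone\{x\in\gamma_y\}$ and $K(\bsgamma):=\sum_x\bbone\{n_x\geq 1\}$ (the number of covered sites), the identity $\caC = \sum_y\|\gamma_y\| = \sum_x n_x$ combined with the definition of $\caI$ gives $\caI = \caC - K$. Therefore
\[
e^{-\caH_{L,\alpha,\lambda}(\bsgamma)} \;=\; e^{\lambda K(\bsgamma)}\prod_{x\in\Lambda_L} w(\gamma_x), \qquad w(\gamma):=\frac{e^{-\tilde\alpha\|\gamma\|}}{\|\gamma\|^{\bbone\{\|\gamma\|>0\}}}.
\]
Setting $\tilde Z_L(\beta):=\sum_{\gamma\in{\rm SAP}_o}e^{-\beta\|\gamma\|}/\|\gamma\|^{\bbone\{\|\gamma\|>0\}}$, the product structure and translation invariance of $\Lambda_L$ yield $Z(L,\alpha,\lambda) = \tilde Z_L(\tilde\alpha)^{L^d}\,\bbE_\nu(e^{\lambda K})$, with $\nu$ the product measure on $\Omega$ of marginals $w/\tilde Z_L(\tilde\alpha)$. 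Since $0\leq K\leq L^d$,
\[
\log\tilde Z_L(\tilde\alpha) - \lambda_- \;\leq\; L^{-d}\log Z(L,\alpha,\lambda) \;\leq\; \log\tilde Z_L(\tilde\alpha) + \lambda_+, \qquad \lambda_\pm := \max(\pm\lambda,0).
\]

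The second ingredient is convexity of $\alpha\mapsto\log Z(L,\alpha,\lambda)$ (a cumulant generating function in $\alpha$), which makes $\bbE_{\alpha,\lambda}(\|\gamma_o\|) = -L^{-d}\partial_\alpha\log Z$ nonincreasing in $\alpha$. Writing $Z(\alpha)$ for $Z(L,\alpha,\lambda)$ with $L,\lambda$ fixed, this gives, for any $\delta>0$,
\[
\frac{\log Z(\alpha) - \log Z(\alpha+\delta)}{\delta L^d} \;\leq\; \bbE_{\alpha,\lambda}(\|\gamma_o\|) \;\leq\; \frac{\log Z(\alpha-\delta) - \log Z(\alpha)}{\delta L^d}.
\]
Everything thus reduces to bounds on $\tilde Z_L(\beta)$. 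For $\beta>\log\mu$, \eqref{connective constant inequality} (valid for ${\rm SAP}_{o,n}$ by the remark after it) yields $\tilde Z_L(\beta)\leq 1 + 2(d-1)\sum_{n\geq 2}(\mu e^{-\beta})^n$, uniformly bounded in $L$. For $\beta<\log\mu$, I fix $\epsilon>0$ with $(\mu-\epsilon)e^{-\beta}>1$; since every self-avoiding polygon on $\bbZ^d$ of length $n\leq L/2$ embeds injectively into $\Lambda_L$ and $|{\rm SAP}(n)|\geq(\mu-\epsilon)^n$ for $n$ large by \eqref{cc}, truncating the sum at $n=L/2$ gives $\tilde Z_L(\beta)\geq c_\epsilon((\mu-\epsilon)e^{-\beta})^{L/2}/L$, hence $\log\tilde Z_L(\beta)\to\infty$ at rate at least linear in $L$.

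Combining the ingredients concludes both parts. For (i), take $\delta\in(0,\tilde\alpha-\log\mu)$; the upper convexity estimate, combined with the uniform bound on $\log\tilde Z_L(\tilde\alpha-\delta)$ and the trivial lower bound $L^{-d}\log Z(\alpha)\geq -\lambda_-$ (from $\tilde Z_L\geq 1$), yields $\bbE_{\alpha,\lambda}(\|\gamma_o\|)\leq(C+|\lambda|)/\delta$ uniformly in $L$, whence $\caL(\alpha,\lambda)<\infty$. For (ii), take $\delta>\log\mu-\tilde\alpha$; then $L^{-d}\log Z(\alpha)\geq\log\tilde Z_L(\tilde\alpha)-\lambda_-\to\infty$ while $L^{-d}\log Z(\alpha+\delta)$ stays bounded, so the lower convexity estimate forces $\bbE_{\alpha,\lambda}(\|\gamma_o\|)\to\infty$, i.e.\ $\caL(\alpha,\lambda)=\infty$. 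The step I expect to require the most care is the lower bound on $\tilde Z_L(\beta)$ for $\beta<\log\mu$: one must convert the mere definition $|{\rm SAP}(n)|^{1/n}\to\mu$ into a finite-volume growth statement via a careful comparison of torus and $\bbZ^d$ polygon counts. Conceptually, however, the critical curve $\alpha+\lambda=\log\mu$ already appears in the factorization itself, since $\alpha$ and $\lambda$ enter the dominant single-site factor only through the combination $\tilde\alpha$.
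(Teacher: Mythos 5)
Your proof is correct, and it takes a partly different, more unified route than the paper. The paper also passes to the parametrization $\rho=\alpha+\lambda$, $\nu=\lambda$ (your $\tilde\alpha$; its $\caN$ is $|\Lambda_L|-K$) and, like you, uses that the interaction term shifts the pressure per site by at most $|\nu|$ together with convexity of the pressure --- but only for part (ii); and there the divergence input at $\nu=0$ is taken from the space-filling theorem of \cite{Copin} for a single polygon (after checking that the extra factor $1/(\|\gamma\|\vee 1)$ is harmless), whereas you get by with the elementary entropy bound $|{\rm SAP}(n)|\geq(\mu-\eps)^n$ for large even $n$ plus the injection of short $\bbZ^d$-polygons into the torus, which makes the argument self-contained; you rightly flag this comparison as the delicate step, and it does go through since an $n$-edge polygon through the origin has diameter at most $n/2$, so for even $n\leq L/2$ the projection to $\Lambda_L$ is injective (only the parity of $n$ needs a word). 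For part (i) the paper proceeds quite differently: it bounds $\tilde\bbP_{L,\rho,\nu}(\caC=k)$ by a factorized estimate, counts the ways of distributing total length $k$ over $|\Lambda_L|$ sites, and uses Stirling's formula to get exponential decay of $\tilde\bbP(\caC>lN)$ for large $l$; your convexity difference-quotient bound, combined with the uniform bound on the single-site partition function at $\tilde\alpha-\delta>\log\mu$ from \eqref{connective constant inequality} and the trivial lower bound on $\log Z$, replaces that computation and gives the clean estimate $(C+|\lambda|)/\delta$, uniformly in $L$. What the paper's route buys is extra information (a quantitative tail bound on $\caC$ in regime (i), and the stronger space-filling statement underlying regime (ii)); what yours buys is a shorter, unified derivation of both regimes from bounds on the single-site free energy alone.
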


Our further results investigate both of the cases above a bit 
more. First, we show that there exists a subset 
of the region where the statement (i) of Theorem 
\ref{cumCycLen} holds, such that for all 
$(\alpha,\lambda)$ in this subset, the quantity 
$\Vert\gamma_o\Vert$ is exponentially integrable.
See Figure \ref{Fig:phasediagram}.

\begin{theorem}\label{expdecay}
	There exists a decreasing function $\lambda \mapsto \alpha_{\rm c}
	(\lambda)$ such that for all $\lambda \in \bbR$  
	and all $\alpha > \alpha_{\rm c}(\lambda)$, there exists 
	$\delta > 0$ such that 
	\begin{equation}\label{expintegrable}
	\limsup_{L \to \infty} \bbE_{L,\alpha,\lambda}
	(\e{\delta \|\gamma_o\|}) < \infty.
	\end{equation}
	The function $\alpha_{\rm c}$ is such that 
	$\alpha_{\rm c}(\lambda) = 
	\log \mu - \lambda$ when $\lambda \leq 0$ and 
	$\alpha_{\rm c}(\lambda) = (\log \mu - c_1 \, \lambda) \vee c_2$ when $\lambda > 0$, where the constants $c_1 \in (0, \infty)$ and $c_2 \in (- \infty, \log \mu)$ are given in \eqref{alphaCformula}.
\end{theorem}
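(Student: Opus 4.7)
The plan is to reduce the statement to an exponential-tail bound $\bbP_{L,\alpha,\lambda}(\|\gamma_o\|=n)\leq C e^{-\eta n}$ with $C,\eta>0$ uniform in $L$; this immediately yields \eqref{expintegrable} for any $\delta\in(0,\eta)$. The starting point is the identity obtained directly from the Gibbs weight $e^{-\caH_{L,\alpha,\lambda}}$: for any $\gamma\in{\rm SAP}_{o,n}$ and any environment $\bsgamma^{-o}=(\gamma_y)_{y\neq o}$,
\[
\bbP(\gamma_o=\gamma\mid\bsgamma^{-o})=\frac{e^{-\alpha n}}{n}\,e^{-\lambda\Delta I(\gamma,\bsgamma^{-o})}\,\bbP(\gamma_o=\zeta\mid\bsgamma^{-o}),
\]
where $\Delta I(\gamma,\bsgamma^{-o})=\#\{x\in\gamma:\ x\in\bigcup_{y\neq o}\gamma_y\}$ is the number of vertices of $\gamma$ already covered by the rest of the configuration. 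Averaging under $\bbP^\zeta(\cdot)=\bbP(\cdot\mid\gamma_o=\zeta)$, summing over $\gamma\in{\rm SAP}_{o,n}$, and using $\bbP(\gamma_o=\zeta)\leq 1$ produces the master estimate
\[
\bbP(\|\gamma_o\|=n)\leq\frac{e^{-\alpha n}}{n}\,\bbE^\zeta\!\left[\,\sum_{\gamma\in{\rm SAP}_{o,n}}e^{-\lambda\Delta I(\gamma,\bsgamma^{-o})}\right].
\]
Exponential decay follows once the bracketed quantity grows at most like $Cn\tilde\mu(\alpha,\lambda)^n$ with $\tilde\mu(\alpha,\lambda)<e^\alpha$, and the critical threshold $\alpha_{\rm c}(\lambda)$ is then $\log\tilde\mu$.

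For $\lambda\leq 0$ the crude bound $\Delta I\leq n$ gives $e^{-\lambda\Delta I}\leq e^{-\lambda n}$, which, together with $|{\rm SAP}_{o,n}|\leq 2(d-1)n\mu^n$ from \eqref{connective constant inequality}, produces $\bbP(\|\gamma_o\|=n)\leq 2(d-1)e^{-(\alpha+\lambda-\log\mu)n}$. Exponential decay thus holds throughout $\alpha+\lambda>\log\mu$; this is exactly the linear branch $\alpha_{\rm c}(\lambda)=\log\mu-\lambda$, and it coincides with the sharp threshold from Theorem \ref{cumCycLen}.

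For $\lambda>0$ the trivial $e^{-\lambda\Delta I}\leq 1$ only reproduces $\alpha>\log\mu$, and to push $\alpha_{\rm c}$ strictly below $\log\mu$ the overlap penalty must be actively used. The plan is to establish a quenched weighted-SAP estimate
\[
\sum_{\gamma\in{\rm SAP}_{o,n}}e^{-\lambda|\gamma\cap U|}\;\leq\;C\,n\,\bigl(\mu\,\Phi(\rho,\lambda)\bigr)^n,\qquad \Phi(\rho,\lambda):=1-\rho(1-e^{-\lambda}),
\]
valid for any $U\subset\Lambda_L$ of density at least $\rho$. Combined with an a priori lower bound $\rho\geq\bar\rho(\alpha,\lambda)>0$ on the density of $\bigcup_{y\neq o}\gamma_y$ under $\bbP^\zeta$ (extracted from the partition-function comparison underlying Theorem \ref{cumCycLen}(ii), which reflects that the overlap-rewarding regime produces a dense system), and a union bound to discard the rare low-density environment, this delivers exponential decay whenever $\alpha>\log\mu+\log\Phi(\bar\rho,\lambda)$. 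Linearising at $\lambda=0^+$ gives the slope $c_1=\bar\rho$ and the linear branch $\log\mu-c_1\lambda$; sending $\lambda\to\infty$ produces the saturation $c_2=\log(\mu(1-\bar\rho))<\log\mu$; and the two branches meet at a breakpoint, giving $\alpha_{\rm c}(\lambda)=(\log\mu-c_1\lambda)\vee c_2$. Monotonicity of $\alpha_{\rm c}$ is then clear, since each branch is decreasing in $\lambda$.

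The main obstacle is the quenched weighted-SAP bound for $\lambda>0$. The direct annealed route fails because the occupancy indicators $\mathbbm{1}\{x\in U\}$ along $\gamma$ are positively correlated under $\bbP^\zeta$, so FKG points the wrong way and the naive independence product $(1-\rho+\rho e^{-\lambda})^n$ is not rigorously available. Working quenched in $U$ sidesteps the correlation issue but replaces it with a deterministic combinatorial problem: enumerate self-avoiding polygons weighted by the number of "hits" on a set of prescribed lower-bounded density, uniformly in $L$. Establishing this estimate (most plausibly by a Hammersley/Kesten-type step-decomposition that inserts an $e^{-\lambda}$ factor at each shadow-visit) and simultaneously producing the quantitative lower bound on the density of $\bigcup_{y\neq o}\gamma_y$ under $\bbP^\zeta$ are the two substantive technical steps on which the whole argument rests.
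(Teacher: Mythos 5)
Your reduction to a uniform exponential tail bound, the conditional identity
\begin{equation*}
\bbP_{L,\alpha,\lambda}(\gamma_o=\gamma\mid\bsgamma^{-o})=\tfrac{e^{-\alpha n}}{n}\,e^{-\lambda\Delta I}\,\bbP_{L,\alpha,\lambda}(\gamma_o=\zeta\mid\bsgamma^{-o}),
\end{equation*}
and the $\lambda\leq 0$ branch are all correct and essentially match the paper (the paper also disposes of $\lambda\leq0$, $\alpha+\lambda>\log\mu$ by the trivial bound plus the connective constant). The problem is the $\lambda>0$ branch, which is the entire substance of the theorem: there you only name two ingredients and defer them, and as formulated at least one of them fails. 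The ``quenched weighted-SAP estimate'' $\sum_{\gamma\in{\rm SAP}_{o,n}}e^{-\lambda|\gamma\cap U|}\leq Cn(\mu\Phi)^n$ for \emph{any} $U$ of global density $\geq\rho$ is false: a set can have density $\rho$ in $\Lambda_L$ while leaving a ball of volume much larger than $n$ around the origin empty, in which case the left side is of order $\mu^n$. You would need a local density hypothesis at the scale of the polygon, and even then the specific factor $\Phi=1-\rho(1-e^{-\lambda})$ is an independence heuristic, not something a Hammersley--Kesten decomposition is known to deliver. The companion ingredient, a uniform lower bound on the density of $\bigcup_{y\neq o}\gamma_y$ under the measure conditioned on $\gamma_o=\zeta$, is also not available from where you point: Theorem \ref{cumCycLen}(ii) concerns $\alpha+\lambda<\log\mu$ and divergence of the expected length, whereas the regime relevant here ($\alpha>\alpha_c(\lambda)$, $\lambda>0$, typically $\alpha+\lambda>\log\mu$) is precisely the one where that theorem gives boundedness, and in any case it says nothing about occupation densities, let alone uniformly in the conditioning. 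So the proposal has a genuine gap, not just missing routine details.

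For comparison, the paper avoids both missing lemmas by a more local argument. For a fixed $\tilde\gamma$ it splits on whether at least a $q$-fraction of the vertices of $\tilde\gamma$ lie in $H$, the set of sites unvisited by the other polygons. In the complementary case the overlap cost alone gives $e^{-(\alpha+(1-q)\lambda)\|\tilde\gamma\|}$, which is your mechanism. In the ``mostly empty'' case it does \emph{not} try to enumerate weighted polygons in a sparse environment; instead it bounds the probability that $\geq q\|\tilde\gamma\|$ \emph{specified} vertices are empty by a partition-function ratio $Z(\Lambda_L\setminus A)/Z(\Lambda_L)$, and Lemma \ref{lemma:estimatepartition} controls this ratio by a multivalued map that fills disjoint adjacent pairs of empty sites with two-edge polygons, yielding a gain $(1+e^{-2\alpha})^{-h}$. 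Summing over the choices of $A\subset\tilde\gamma$ costs only an explicit entropy factor $e^{(1-q)(1-\log(1-q))\|\tilde\gamma\|}$. This replaces your global density estimate and quenched SAP count by a single, elementary energy--entropy comparison, and produces the constants $c_1=1-q$ and $c_2=\alpha^*$ in \eqref{alphaCformula}. If you want to salvage your route, the realistic fix is to import exactly this idea: bound the probability that the specific vertex set $\tilde\gamma\cap H$ is large via a dimer-filling comparison of partition functions, rather than trying to prove a density lower bound plus a weighted connective-constant estimate.
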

For $\lambda  > 0$ and $\alpha \geq \log \mu$, 
 (\ref{expintegrable}) follows quite directly  from  the trivial upper bound, $\mathbb{P}_{L, \alpha, \lambda}( \|\gamma_o \| > k) \leq e^{- \alpha k}$,
and from (\ref{cc}). Intuitively, this entails that the 
mutual repulsion on polygons that is implemented through 
$\lambda > 0$ does not make the polygons much more likely 
to be long. 
Theorem \ref{expdecay} goes beyond this and states that 
any positive repulsion $\lambda>0$ leads to exponential integrability 
of $\| \gamma_o\|$ for values of $\alpha$ where without
repulsion (i.e.\ for $\lambda = 0$) the cycles would be 
long. Comparison with numerical results about spatial 
random permutations (cf. Subsection \ref{discussion}) 
below suggests that there is a nonempty region of the 
parameter space so that Theorem \ref{cumCycLen} (i) holds
but exponential integrability of $\| \gamma_o\|$ fails; 
Theorem \ref{expdecay} just gives an upper bound on how 
large such a region could be - to prove (or disprove) the 
existence of such a region is a major open problem in the 
model. 

Turning to the region of phase space where Theorem 
\ref{cumCycLen} (ii) holds, 
we show in Theorem \ref{theo:weakly space 
filling} below that there exists a subset of this region 
where  $\gamma_o$ is weakly space 
filling; see Figure \ref{Fig:phasediagram}. Again, a 
natural question is whether Theorem \ref{theo:weakly space 
filling} actually holds for the full half space where 
Theorem \ref{cumCycLen} (ii) is valid. We do not have a 
clear conjecture about whether or not this  should be
case. 

In order to state our result precisely, 
let $\Gamma_{L}^{\xi}(\gamma_o)$ be the set of vertices in $\Lambda_L$ which have distance at least $\xi$ from
$\gamma_o$ and let $A^\xi_L(\gamma_o)$ be the largest connected component of $\Gamma_{L}^{\xi}(\gamma_o)$. 

\begin{theorem}\label{theo:weakly space filling}
For any $\alpha \in \mathbb{R}$, there exists $\lambda_{sf} = \lambda_{sf}(\alpha)$ such that if $\lambda < \lambda_{sf} $,
then the polygons are  weakly space filling, i.e.
there exist two constants $c, \xi \in (0, \infty)$ such that,
\begin{equation}\label{eq: weakly space filling}
\lim\limits_{  L \rightarrow \infty} \, \, \mathbb{P}_{L, \alpha, \lambda} ( |A^{\xi}_L(\gamma_o)| > c \log L    ) =0.
\end{equation}
Moreover, if $\alpha < \log \mu$, we can choose $\lambda_{sf}(\alpha)  = \log \mu - \alpha > 0.$
\end{theorem}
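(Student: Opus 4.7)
My plan is to prove weak space filling via a Peierls-type contour argument applied to the ``hole set'' $\Gamma_L^\xi(\gamma_o)$. The main input is an exponential-in-$|A|$ upper bound on $\bbP_{L,\alpha,\lambda}(E_A)$, where $E_A := \{A \subset A^\xi_L(\gamma_o)\}$ and $A \subset \Lambda_L$ is a connected set. Given such a bound with decay rate exceeding $\log b_d$ (the dimensional lattice-animal constant), the conclusion \eqref{eq: weakly space filling} follows by translation invariance and the standard lattice-animal enumeration
\begin{equation*}
\bbP_{L,\alpha,\lambda}\bigl(|A^\xi_L(\gamma_o)| \geq k\bigr) \;\leq\; L^d\, b_d^k \max_{|A|=k} \bbP_{L,\alpha,\lambda}(E_A),
\end{equation*}
after choosing $k = c''\log L$ with $c''$ sufficiently large.

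To establish the exponential bound on $\bbP_{L,\alpha,\lambda}(E_A)$, I would compare the restricted partition function $Z(E_A)$ with $Z$ by a polygon-surgery. The idea is that, for $\bsgamma \in E_A$, the polygon $\gamma_o$ is forced to stay outside the $\xi$-thickening $A^+$ of $A$; we then produce many distinct configurations outside $E_A$ by inserting into $\gamma_o$ a detour that crosses $A^+$, traverses a self-avoiding polygonal subpath inside a reference region of diameter comparable to $|A|$, and returns. Since $A$ is connected, for $\xi$ large enough it contains polygonal paths of length $\ell \in [c_1|A|, c_2|A|]$; by the lower counterpart to \eqref{connective constant inequality}, the number of such detours of length $\ell$ is at least of order $\mu^{\ell}/\ell$. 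Each such detour produces a distinct $\bsgamma \in \Omega \setminus E_A$ whose Gibbs weight satisfies
\begin{equation*}
\frac{w(\bsgamma)}{w(\bsgamma')} \;\geq\; \e{-\alpha \ell - \lambda \Delta\caI - \log((\|\gamma_o'\|+\ell)/\|\gamma_o'\|)},
\end{equation*}
with $\Delta\caI \in [0,\ell]$. For $\lambda \leq 0$ the term $-\lambda\Delta\caI$ is nonnegative, while for $\lambda > 0$ we use $\Delta\caI \leq \ell$. Summing the resulting entropic gain $\mu^{\ell}/\ell$ against the energy cost $e^{(\alpha+\lambda)\ell}$ yields
\begin{equation*}
\bbP_{L,\alpha,\lambda}(E_A) \;\leq\; C\, \e{-(\log\mu - \alpha - \lambda)\, c_1|A|}\cdot \mathrm{poly}(L).
\end{equation*}

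For the second assertion of the theorem (the case $\alpha < \log\mu$), the decay rate is $c_1(\log\mu - \alpha - \lambda)$, which is strictly positive as soon as $\lambda < \log\mu - \alpha$; by choosing $\xi$ large enough (so that $c_1$ approaches its optimal value coming from the lattice-animal comparison), this rate exceeds $\log b_d$ and gives $\lambda_{sf}(\alpha) = \log\mu - \alpha$. For the first assertion (general $\alpha \in \bbR$), taking $\lambda$ sufficiently negative makes $\log\mu - \alpha - \lambda$ arbitrarily large, which proves the existence of $\lambda_{sf}(\alpha)$.

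The main obstacle will be the careful combinatorial bookkeeping in the surgery. In particular, attaching the detour requires a bridge from $\gamma_o$ to $A^+$, and since $\gamma_o$ is at distance $\geq \xi$ from $A$, a short bridge only exists along a specific boundary region. One therefore has to either condition on the location of the closest vertex of $\gamma_o$ to $A$ (and sum over it, paying a factor bounded by $|\partial A^+|$ that is absorbed by the exponential gain), or equivalently show that in $E_A$ the polygon $\gamma_o$ typically reaches the boundary $\partial A^+$. Establishing injectivity (or controlled multiplicity) of the inverse of the surgery — namely, being able to uniquely identify and remove the inserted detour from the modified configuration — is also a delicate point that requires fixing a canonical reference polygon $\sigma_A$ deterministically from $A$.
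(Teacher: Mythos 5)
Your overall strategy --- a Peierls-type surgery that extends $\gamma_o$ by a detour into the hole region and compares Gibbs weights --- is indeed the strategy of \cite{Copin} and of the paper. However, there are two genuine gaps.

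First, the entropy count. You assert that the number of detours of length $\ell$ confined to a region of diameter $\asymp |A|$ is of order $\mu^\ell/\ell$, quoting a ``lower counterpart'' of \eqref{connective constant inequality}. That lower bound is for \emph{unconstrained} self-avoiding polygons; once you confine them to a bounded region, the exponential growth rate drops strictly below $\mu$. Recovering the full rate $\mu$ for confined walks is precisely the role of the box machinery (Definition \ref{def:Pm}, Proposition \ref{prop:Zm}, Lemma \ref{lem:supermult}): one enumerates walks that traverse a connected family of boxes of side $2m+1$, uses supermultiplicativity $Z_F(x)\geq Z_m(x)^{|F|}$, and lets $m\to\infty$ so that $Z_m(x)\to\infty$ for every $x>1/\mu$. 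This also resolves the lattice-animal threshold you worry about: since the per-box gain $Z_m(x)$ can be made arbitrarily large, one beats the box-animal entropy without any loss, which is how the paper reaches the sharp threshold $\lambda_{sf}(\alpha)=\log\mu-\alpha$ for $\alpha<\log\mu$. Your sketch, working at the level of lattice sites, would only give a rate that must additionally dominate $\log b_d$.

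Second, and more seriously, your treatment of the first assertion (general $\alpha$, taking $\lambda$ very negative) does not work as written. You bound the weight ratio by $\e{-\alpha\ell-\lambda\Delta\caI-\log(\cdot)}$ and then observe that for $\lambda\leq 0$ the term $-\lambda\Delta\caI$ is nonnegative. But that only yields a \emph{lower} bound of $\e{-\alpha\ell}\cdot\text{poly}$, i.e.\ effective fugacity $\e{-\alpha}$, i.e.\ decay rate $\log\mu-\alpha$ --- not $\log\mu-\alpha-\lambda$. So for $\alpha\geq\log\mu$, your argument yields a nonpositive rate no matter how negative $\lambda$ is: a priori the detour might pass only through empty vertices and so earn nothing from the attraction. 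To actually extract a reward from $\lambda<0$, one must show that under the Gibbs measure (conditioned on $\gamma_o$) a positive fraction of the detour's vertices are typically already occupied by other polygons, so that $\Delta\caI\geq c\ell$ for some $c\in(0,1)$ with high probability. This is exactly what Lemmas \ref{lemma:exponential bound}, \ref{prop:NA bound} and Proposition \ref{prop:gluing} establish, and it is the step that makes the ``sufficiently negative $\lambda$'' claim rigorous. Without it, the claimed rate $\log\mu-\alpha-\lambda$ for $\lambda<0$ is unjustified and the first part of the theorem does not follow.
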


A corresponding result has been obtained in \cite{Copin} 
for a single self-avoiding polygon, and indeed our proof 
heavily uses their methods and results.

\subsection{Discussion of the model}
\label{discussion}
The main motivation of the model is its relation to a variant of 
the model of spatial random permutations (SRP). 
The latter is defined on 
the set $\caS(\Lambda_L)$ of permutations of $\Lambda_L$, and the 
probability of a permutation $\sigma \in \caS(\Lambda_L)$ is given by 
\[
\bbP_{L,\alpha}(\{\sigma\}) = \begin{cases}
	0 & \text{if } |\sigma(x) - x| > 1 \text{ for some } x, \\
	\frac{1}{Z(L,\alpha)} \e{- \alpha \sum_{x \in \Lambda_L} |\sigma(x) - x|} & 
	\text{otherwise}.
	\end{cases}
\]
Here, the distance function 
respects the periodic boundary conditions. The most important 
question in the model of SRP is about the existence of long cycles: 
if $C_x(\sigma)$ denotes the cycle of $\sigma$ containing a 
point $x$, and $|C_x|$ its length, 
then the aim is to prove that there exists some 
$\alpha \in \bbR$ such that 
$\lim_{L \to \infty} \bbE_{L,\alpha}(|C_x|) = \infty$. This 
seems to be a rather difficult problem and has deep connections to 
the theory of quantum spin systems - see \cite{BU1, BU2, Feynman} 
for details. 

$\caS(\Lambda_L)$, the support of the measure of spatial random permutations, can be 
viewed as the set of (typically non-connected) 
directed subgraphs of $\Lambda_L$, where all 
vertices have degree $2$. On the other hand, each ISAP 
$\bsgamma$ with $\caI(\bsgamma) = 0$ can be mapped to such a
subgraph, by 'forgetting' to which point $x$ each self-avoiding 
polygon belongs. This also means that for given $\bsgamma$,
there are $\prod_{x \in \Lambda_L, \|\gamma_x\|>0} \|\gamma_x\|$ different ISAPs that lead to 
the same spatial random permutation as $\bsgamma$. 
Thus thanks 
to the sum of logarithms in \eqref{Hamiltonian alpha lambda}, 
we can relate ISAP to SRP in the limit of large $\lambda$; 
for example, for any $x \in \Lambda_L$ and any $k > 1$,  
\[
\lim_{\lambda \to \infty} \bbP_{L,\alpha,\lambda} 
(\max_{z \in \Lambda: x \in \gamma_z} \Vert \gamma_z \Vert = k) = 
\lim_{\lambda \to \infty} \bbP_{L,\alpha,\lambda} 
(\min_{z \in \Lambda: x \in \gamma_z} \Vert \gamma_z \Vert = k) = 
  \bbP_{L,\alpha}(|C_x| = k). 
\]
By defining a suitable projection from ISAP to multigraphs  
for finite $\lambda$, we could make this connection much stronger, 
but this is not of interest here. Nevertheless, already the given 
example shows that understanding ISAP for large (fixed) $\lambda$
in the limit $L \to \infty$ may be 
of interest for the study of SRP; on the other hand, of course the 
problem of interchanging the limits of $\lambda \to \infty$ and 
$L \to \infty$ is a serious one and would need to be addressed as 
well. 

We have chosen the representation \eqref{Hamiltonian alpha lambda}
for its obvious relations to SRP, but there is another set of parameters 
that reveals additional information about the model.    
The key observation is that on vertices that 
are not empty, the overlap is precisely one unit lower than the 
number of polygons that visit that vertex. Thus writing 
$\caO_x(\bsgamma)$ for the occupation number of (i.e.\ number of polygons visiting) a vertex $x$, we find that   
\[
\caI(\bsgamma) = \caC(\bsgamma) - \sum_{x \in \Lambda_L} 
\bbone \{ \caO_x(\bsgamma) \geq 1 \} = \caC(\bsgamma) - |\Lambda_L| + 
\caN(\bsgamma),
\]
with 
\[
\caN(\bsgamma) = \sum_{x \in \Lambda_L} \bbone \{ \caO_x(\bsgamma) = 0 \}
\]
being the number of empty vertices. Thus if we define 
\[
\tilde \caH_{L,\rho, \nu}(\bsgamma) = \rho \caC(\bsgamma) + \nu \caN(\bsgamma) + 
\sum_{x \in \Lambda_L, \| \gamma_x \| > 0} \log \| \gamma_x \| 
\]
and
\begin{equation} \label{main prob meas}
\tilde \bbP_{L,\rho,\nu} (\bsgamma) = \frac{1}{\tilde Z_{L,\rho,\nu}}
\e{- \tilde \caH_{L,\rho,\nu}(\bsgamma)},
\end{equation}
then 
\[
\bbP_{L,\alpha,\lambda}(\bsgamma)  = 
\tilde \bbP_{L, \alpha + \lambda, \lambda} (\bsgamma)
\]
for all $L,\alpha,\lambda,\bsgamma$. Note that our new set of parameters
makes the relation to another important model of interacting shapes 
apparent, namely the double dimer model \cite{Double Dimer}: indeed, 
in the limit $\nu \to \infty$, the finite volume model converges to the
double dimer model thanks to the sum of logarithms in the Hamiltonian. 
Again, the question of interchange 
of the limits in $L$ and $\nu$ in highly non-trivial.

Model \eqref{main prob meas} will be used to prove Theorem 
\ref{cumCycLen}. We note already here an interesting aspect about that 
theorem which becomes apparent from  \eqref{main prob meas}: 
since we trivially have $\caN(\bsgamma) 
\leq |\Lambda_L|$, in case (ii) of Theorem \ref{cumCycLen}
we find that 
\[
\left\vert\tfrac{1}{|\Lambda_L|} 
\tilde{\bbE}_{L,\rho,\nu} (\tilde \caH_{L,\rho,\nu})\right\vert \geq 
\tfrac{\vert\rho\vert}{|\Lambda_L|}  \tilde{\bbE}_{L,\rho,\nu} (\caC) - \vert\nu\vert = 
\vert\rho\vert \tilde{\bbE}_{L,\rho,\nu} (\| \gamma_o \| ) -\vert\nu\vert\to \infty
\]
as $L \to \infty$ if $\rho\neq 0$ (the last equality holds due 
translation invariance). In other words, the mean energy per unit 
volume diverges in the case $\alpha + \lambda < \log \mu$, which 
from the point of view of statistical mechanics 
makes the model highly singular in that regime. Note, however, that 
the cases of interest for the connection to SRP are those of 
large $\lambda$  and for fixed $\alpha$; those are always covered by 
case (i) of Theorem \ref{cumCycLen}.

\section{Proofs of the Theorems}
\subsection{Proof of Theorem \ref{cumCycLen}}
We will work with the model \eqref{main prob meas}.
The statement of Theorem \ref{cumCycLen} then is that 
\[
\limsup_{L\to\infty} \tilde \bbE_{L,\rho,\nu} (\caC / |\Lambda_L|)
\begin{cases} < \infty & \text{if } \rho > \log \mu \\
= \infty & \text{if } \rho < \log \mu.	
\end{cases}
\]
We begin with the second case.
We first consider the case $\nu=0$. Then, the model is 
non-interacting, and 
$\tilde \bbE_{L,\rho,0} (\|\gamma_o\|) = \bbE_{L,\rho}(\|\gamma\|)$, 
where the latter expectation is for the single self-avoiding 
polygon model with probability measure  
\[
\bbP_{L,\rho}(\gamma)  = \frac{1}{Z_{L,\rho}} 
\e{- \rho \| \gamma \|} \frac{1}{\|\gamma\| \vee 1}.
\]
For this measure without the factor of $\frac{1}{\|\gamma\| \vee 1}$ it
has been shown in \cite{Copin} that $\gamma$ is almost surely weakly 
space filling, and a fortiori we have 
$\limsup_{L \to \infty} \tilde  \bbE_{L,\rho,0} (\|\gamma_o\|) = \infty$. 
Inspection of the proofs in \cite{Copin} shows that the factor 
$\frac{1}{\|\gamma\| \vee 1}$ does not change this. 

In order to treat $\nu \neq 0$, we define the  finite volume 
pressure as 
\[
\Phi_L(\rho,\nu) = \frac{1}{|\Lambda_L|} \log \tilde Z_{L,\rho,\nu}.
\]
The usual argument involving H\"older's inequality (which we sketch in 
the appendix for the reader's convenience) shows that 
$\Phi_L$ is a convex function. Direct calculation shows that
\[
\tilde \bbE_{L,\rho,\nu} (\| \gamma_o \| ) = \frac{1}{|\Lambda_L|} 
\tilde  \bbE_{L,\rho,\nu} (\caC) = - \partial_\rho \Phi_L(\rho,\nu),
\]
and 
\[
\frac{1}{|\Lambda_L|}  \tilde  \bbE_{L,\rho,\nu}(\caN) = - \partial_\nu \Phi_L(\rho,\nu).
\]
The latter quantity is strictly between $0$ and $1$. Using this 
observation and the fundamental theorem of calculus, 
we find that for each $\nu \in \bbR$ 
\[
| \Phi_L(\rho,\nu) - \Phi_L(\rho,0) | = \Big| 
\int_0^\nu \partial_s \Phi_L(\rho,s) \, \dd s \Big| \leq |\nu|.
\]
The triangle inequality then yields 
\begin{equation}
\label{eq:compare Phi}
|\Phi_L(\rho,\nu) - \Phi_L(\rho+h, \nu)| \geq 
| \Phi_L(\rho,0) - \Phi_L(\rho+h,0) | - 2 \nu.
\end{equation}
By the convexity of $\Phi_L$, the nonnegative
function $\rho \mapsto -\partial_\rho \Phi_L(\rho,\nu) = \tilde  \bbE_{L,\rho,\nu}(\| \gamma_o\|)$ is 
monotone decreasing for each $\nu$.  
Thus the inequality \eqref{eq:compare Phi} implies 
\[
\Phi_L(\rho,0) - \Phi_L(\rho + h,0) \leq - \int_0^h \partial_r \Phi_L(\rho + r,\nu) \, \dd r 
+ 2 \nu. 
\]
In particular, if $\rho < \log \mu$ and 
$0 < h < \log \mu - \rho$, then both sides of the above inequality diverge to $+\infty$ as $L \to \infty$. So, 
again using monotonicity, we get 
\[
\infty = - \lim_{L\to\infty} \int_0^h \partial_r \Phi_L(\rho + r,\nu) \, \dd r 
\leq - h \lim_{L\to\infty} \partial_\rho \Phi_L(\rho)
= h \lim_{L\to\infty} \tilde \bbE_{L,\rho,\nu}(\|\gamma_o\|).
\]
This shows claim (ii) of Theorem \ref{cumCycLen}.  

Let us now treat case (i). We write 
$|\Lambda_L| = N$ for brevity, and observe that for 
all $k \in \bbN$, 
\[
\tilde  \bbP_{L,\rho,\nu}(\caC = k) \leq \e{|\nu| N} 
\tilde \bbP_{L,\rho,0}(\caC = k) = \e{|\nu| N} 
\sum_{n_1, \ldots, n_N: \sum_{i} n_i = k/2} 
\tilde  \bbP_{L,\rho,0} (\forall i: \| \gamma_{x_i} \| = 2 n_i). 
\]
The latter probability factorizes, and for each $i$, we have 
\[
\tilde \bbP_{L,\rho,0}( \| \gamma_{x_i} \| = 2 n_i) \leq 
|{\rm SAP}_{2n_i}| \e{-2 \rho n_i} / (2n_i).
\]
By \eqref{connective constant inequality}, $|{\rm SAP}_n| \leq 2 (d-1) n \mu^n$ for all $n$. 
The exponentials then combine to a global factor of 
$\e{- (\rho - \log \mu) k}$, and the number of ways of partitioning
the integers $1$ to $k/2$ into $N$ possibly empty subsets is given 
by $(k+1)^{N-1} / (N-1)!$. In conclusion, we obtain 
\begin{equation}
\label{eq:C estimate}
\tilde \bbP_{L,\rho,\nu}(\caC = k) \leq \e{N (|\nu| + \log (d-1))} 
\e{- k (\rho - \log \mu)} \frac{(k+1)^{N-1}}{(N-1)!}.
\end{equation}
Let us abbreviate $\eps := \rho - \log \mu$ and 
$q = |\nu| + \log (d-1)$. Estimate \eqref{eq:C estimate} 
is actually quite poor when  $k$ is of the order of 
$N / \eps$; in that case a calculation involving Stirlings 
formula shows that the right hand side grows like 
$\frac{1}{2\pi N} \e{N(q+2)} \eps^{-N}$ which, at least 
for small $\epsilon$, is much large than $1$ for large 
$N$. We should therefore use \eqref{eq:C estimate} only 
for even larger $k$. Observe that 
for each $l,m \in \bbN$, we have  
\begin{equation} \label{must be summable}
\tilde \bbE_{L,\rho,\nu} (\caC / |\Lambda_L|) = \frac{1}{N} 
\sum_{k=0}^\infty \tilde \bbP_{L,\rho,\nu} ( \caC > k) \leq 
\sum_{l=0}^\infty \tilde \bbP_{L,\rho,\nu}( \caC > l N)
\leq m + \sum_{l=m}^\infty \tilde \bbP_{L,\rho,\nu}( \caC > l N)
\end{equation}
For $j \geq 0$, estimate \eqref{eq:C estimate} gives
\[
\begin{split}
& \tilde  \bbP_{L,\rho, \nu}(jN < \caC \leq (j+1)N) \leq 
\e{qN} \sum_{k= jN+1}^{(j+1)N} \frac{(k+1)^{N-1}}{(N-1)!} 
\e{-\eps k}\\
& \quad \leq \e{qN} \frac{((j+1)N+1)^{N-1}}{(N-1)!} \e{-\eps j N} 
\sum_{k=1}^\infty \e{-\eps k} \\
& \quad \leq \e{N (q - \eps j)} (j+2)^{N-1} \frac{N^{N-1}}
{\sqrt{2 \pi} (N-1)^{N-1/2} \e{-N}} \frac{1}{1-\e{-\eps}} \\
& \quad \leq \e{N(q - \eps j + \log(j+2) + 1)} \frac{1}{\sqrt{2 N \pi} 
(1 - \e{-\eps})} (1 + \frac{1}{N-1})^{N-1}
\end{split}
\]
The product of the latter two factors above is $\leq 1$ for large $N$. So by 
choosing $m$ so large that 
\[
\eps j - \log (j+2) - q - 1 > \eps j/2 
\]
for all $j \geq m$, we find that for all large enough $L$ and $l\geq m$,  
\[
\tilde  \bbP_{L,\rho,\nu}( \caC > lN) \leq  
\sum_{j=l}^\infty \e{-\eps j /2} = \frac{\e{-\eps l/2}}{1 - \e{-\eps/2}}
.
\]
So, \eqref{must be summable} is summable and the proof is finished.

\subsection{Proof of Theorem \ref{expdecay}}
\label{sect:proof of Theo2}
When $\lambda \leq  0$ and $\alpha +\lambda > \log\mu$, \eqref{expintegrable} follows from 
$
\mathbb{P}_{L, \alpha, \lambda} ( \gamma_o  = \tilde{\gamma} ) \leq e^{ - ( \alpha + \lambda) \Vert \tilde{\gamma}\Vert}
$
and (\ref{cc}).
By Theorem \ref{cumCycLen}, we then have $\alpha_{\rm c}(\lambda) = \log \mu - \lambda$.
The proof that when $\lambda >0$, $\alpha_{\rm c}(\lambda)$ can be chosen being strictly less than the trivial bound $\log \mu$ and that it is decreasing with respect to $\lambda$ in $[0, \infty)$ requires some work.
We will first state and prove an auxiliary lemma and, after that, we will present the proof.

For any 
$A \subseteq \Lambda_L$, define 
$\Omega_A$ as the set of configurations $\boldsymbol{\gamma} \in \Omega$ such that $\gamma_x \cap (\Lambda_L \setminus A) = \emptyset$ for all $x \in \Lambda_L$. In other words, all self-avoiding polygons $\gamma_x$ such that $x \not\in A$ are empty and all self-avoiding polygons $\gamma_x$ such that $x \in A$ are either empty or contained in $A$.
Define also,
$$
Z(A) : = \sum\limits_{\boldsymbol{\gamma} \in \Omega_A} e^{- \mathcal{H}_{L, \alpha, \lambda}(\boldsymbol{\gamma})},
$$
where the dependence of the previous quantity on $L$, $\alpha$ and $\lambda$ is implicit.
\begin{lemma}\label{lemma:estimatepartition}
Let $A \subset B \subseteq \Lambda_L$ be  sets such that $A$  contains at least $h$ distinct pairs of adjacent sites.
In other words $A$ contains at least $h$ pairs of sites,
$\{ x_1, y_1 \}$, $\ldots$, $\{x_h, y_h\}$, such that $x_i$ and $y_i$ are nearest neighbours for each $i \in \{1, \ldots h \}$
and  $\{x_i, y_i\} \cap \{x_j, y_j\} = \emptyset$ if $i \neq j$. Then,
$$
\frac{{Z}(B \setminus A)}{{Z}(B)} \leq \left(\frac{1}{1 + e^{-2\alpha}} \right)^h.
$$
\end{lemma}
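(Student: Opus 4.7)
My plan is to build a weight-revealing injection from $\Omega_{B\setminus A}$, enhanced by a per-pair choice, into $\Omega_B$, and to read off the bound directly. Fix $\boldsymbol\gamma\in\Omega_{B\setminus A}$: by definition, no polygon of $\boldsymbol\gamma$ touches $A$, so in particular $\gamma_{x_i}=\gamma_{y_i}=\zeta$ for every $i=1,\dots,h$ and no other polygon visits $x_i$ or $y_i$. For each pair $\{x_i,y_i\}$ I would offer three local moves: leave the pair as is; replace $\gamma_{x_i}$ by the length-two polygon with edges $x_i\to y_i\to x_i$; or replace $\gamma_{y_i}$ by the length-two polygon with edges $y_i\to x_i\to y_i$. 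Each modification yields a configuration in $\Omega_B$ since $\{x_i,y_i\}\subseteq A\subseteq B$, and different labels $(s_1,\dots,s_h)\in\{0,1,2\}^h$ combined with different $\boldsymbol\gamma$ give distinct outputs, so this is a genuine injection $\Omega_{B\setminus A}\times\{0,1,2\}^h\hookrightarrow\Omega_B$.

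The next step is to track what each local move does to the Hamiltonian. Inserting a length-two polygon at $x_i$ (or at $y_i$) increases $\caC$ by $2$, leaves $\caI$ unchanged (the two newly visited sites had occupation $0$, nothing else reaches them, and the $h$ pairs are disjoint, so each new site ends with occupation exactly $1$ and contributes $0$ to the overlap sum), and adds $\log 2$ to $\sum_{x}\log\|\gamma_x\|$. Hence each insertion multiplies the Boltzmann weight by $e^{-2\alpha}/2$, independently of $\lambda$. Summing over the three choices at each of the $h$ independent pairs yields the multiplicative enhancement $\prod_{i=1}^{h}\bigl(1+2\cdot e^{-2\alpha}/2\bigr)=(1+e^{-2\alpha})^{h}$, and comparing with the full sum defining $Z(B)$ gives
\[
Z(B)\;\geq\;(1+e^{-2\alpha})^{h}\,Z(B\setminus A),
\]
which rearranges to the claimed inequality.

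The only point that needs care is verifying that $\caI$ really is unchanged by each insertion, and this is where both hypotheses enter: the condition $\boldsymbol\gamma\in\Omega_{B\setminus A}$ together with $\{x_i,y_i\}\subseteq A$ ensures that no pre-existing polygon touches the support of the inserted one, while the disjointness of the $h$ pairs ensures that the inserted polygons do not touch each other. I do not expect a serious obstacle: the estimate is purely combinatorial bookkeeping and uses neither the connective constant nor any non-trivial property of self-avoiding polygons. The factor $\tfrac12$ coming from the $\log\|\gamma_x\|$ term in $\caH_{L,\alpha,\lambda}$ is essential, as it is precisely what combines the two orientations into the clean factor $e^{-2\alpha}$ in the final bound rather than the weaker $2e^{-2\alpha}$ one would otherwise get.
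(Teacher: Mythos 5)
Your proposal is correct and is essentially the paper's own argument: the paper also fills the empty pairs in $A$ with two-edge polygons via a multi-valued map, obtaining the same per-pair factor $1+2\cdot e^{-2\alpha}/2=1+e^{-2\alpha}$ (their disjoint sets $C,D\subset\{1,\dots,h\}$ encode exactly your three-way choice per pair, with the $\lambda$-term dropping out for the same disjointness reasons). The only cosmetic difference is bookkeeping: the paper first factorizes $Z(B)\geq Z(B\setminus A)\,Z(A)$ and then lower-bounds $Z(A)\geq(1+e^{-2\alpha})^h$, whereas you package the same computation as a single injection $\Omega_{B\setminus A}\times\{0,1,2\}^h\hookrightarrow\Omega_B$.
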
 
\begin{proof}
We use a simple multi-valued map principle, which consists of filling an empty region of space with polygons consisting in just two edges.
To begin, note that
$$
Z(B) \geq \sum\limits_{\substack{ \gamma \in \Omega_B : \\ \gamma_x \cap A = \emptyset \mbox{ \footnotesize if } x \in B \setminus A \\ \gamma_x \cap B \setminus A  = \emptyset \mbox{ \footnotesize  if } x \in  A
 }} e^{-\mathcal{H}_{L,\alpha, \lambda}(\boldsymbol{\gamma})} 
= Z(B \setminus A) Z(A),
$$
where the inequality holds since the sum is over configurations $\boldsymbol{\gamma}$ in a subset of $\Omega_B$ and the identity holds since the previous sum can be factorised into the product of two independent sums which can be identified with $Z(B\setminus A)$ and $Z(A)$.
From this we deduce that,
\begin{equation}\label{relationpartitions}
\frac{{Z}( B \setminus A)}{{Z}(B)} \leq 
\frac{{Z}( B \setminus A)}{{Z}(B\setminus A) {Z}(A) }
\leq \frac{1}{  {Z}( A)}.
\end{equation}
The proof now consists in providing a lower bound to $Z(A)$.
For this, we will sum over configurations in a subset of $ \Omega_A$ for which we will compute the weight explicitly. To define this set, recall that $A$ contains $h$ disjoint pairs of adjacent sites, $\{x_1, y_1\}$, $\ldots$, $\{x_h, y_h\}$.
Given two sets of integers
$C, D \subset \{1, \ldots, h\}$ such that $C \cap D = \emptyset$, we let 
$\boldsymbol{\gamma}^{C,D} =  (\boldsymbol{\gamma}^{C,D}_x)_{x \in \Lambda_L} \in \Omega_A$ be the configuration which is defined as follows. Namely,  
 for each $i \in C$, $\gamma^{C,D}_{x_i}$ is the polygon consisting in   one edge  directed from $x_i$ to $y_i$ and one edge directed  from $y_i$ to $x_i$, and  $\gamma^{C,D}_{y_i} : = \zeta$. For each $i \in D$, $\gamma^{C,D}_{x_i} : = \zeta$ and $\gamma^{C,D}_{y_i}$ is the polygon consisting in  one edge directed from $y_i$ to $x_i$ and one edge directed  from $x_i$ to $y_i$. For each $x \in A \setminus \cup_{i \in C \cup D} \big ( \{x_i, y_i\} \big ) $, we set $\gamma^{C,D}_x  := \zeta$. This concludes the definition of  
$\boldsymbol{\gamma}^{C,D}$.
By definition of energy,
 (\ref{Hamiltonian alpha lambda}),  we have that for each $C, D \subset \{1, \ldots, h\}$ (possibly empty sets) such that $C \cap D = \emptyset$,
 $$
e^{- \mathcal{H}_{L, \alpha, \lambda} (\boldsymbol{\gamma}^{C,D} )  } = \frac{1}{2^{|C| + |D|}} \,  e^{-2 \alpha (| C | + |D|)}.
 $$
Using the fact that the number of pairs of sets $C, D \subset \{1, \ldots, h\}$ such that $C \cap D = \emptyset $ and $|C| + |D| = n \in \{0, \ldots, h\}$ is $2^n \binom{h}{n}$ (there are $\binom{h}{n}$ ways to choose $h$ out of $n$ pairs and $2^n$ ways to choose which vertex of the pair generates a polygon),
we obtain that, 
\begin{align*}
Z(A)  & \geq \sum\limits_{\substack{C, D \subset \{1, \ldots, h\} :  \\ C \cap D = \emptyset} } e^{- \mathcal{H}_{L, \alpha, \lambda} (\boldsymbol{\gamma}^{C,D} )  } \\
& = \sum\limits_{\substack{C, D \subset \{1, \ldots, h\} :  \\ C \cap D = \emptyset} }
\frac{1}{2^{|C| + |D|}} \,  e^{-2 \alpha (| C | + |D|)} \\
& = \sum\limits_{n \in \{0, \ldots, h\} }
\sum\limits_{\substack{C, D \subset \{1, \ldots, h\} :  \\ C \cap D = \emptyset \\  |C| + |D| = n } }
\frac{1}{2^{n}} \,  e^{-2 \alpha  n}   \\
& = 
\sum\limits_{n \in \{0, \ldots, h\} }  2^{n} \, \, \binom{h}{n}
\frac{1}{2^{n}} \,  e^{-2 \alpha   n}  \\
& = ( 1 + 2^{-2 \alpha})^{h}.
\end{align*}
Replacing the previous bound in (\ref{relationpartitions}) the proof of the lemma is concluded.
\end{proof}
Let
$
H(\boldsymbol{\gamma}) =  \{   x \in \Lambda_L : \, \, \forall  y \in \Lambda_L  \setminus  \{o\},  \, \, x \notin \gamma_y    \}
$
 be the set of sites which
are not visited by any  polygon starting from $\Lambda_L \setminus \{o\}$.
For any $q \in (0, 1)$, we have that
\begin{equation}
\mathbb{P}_{L, \alpha,\lambda} ( \gamma_o =  \tilde{\gamma} ) \nonumber = \\
\mathbb{P}_{L, \alpha, \lambda}  ( \gamma_o =  \tilde{\gamma}, \, \,   |H \, \, \cap \, \, \tilde{\gamma} | <  q  \, \|\tilde{\gamma}\|  \,  )  +
\mathbb{P}_{L, \alpha, \lambda}   ( \gamma_o =  \tilde{\gamma}, \, \,  \ |H \, \, \cap \, \, \tilde{\gamma} | \geq  q  \, \|\tilde{\gamma}\|  \,  ) .
\end{equation}
We will bound both terms on the right-hand side of the previous expression from above separately.
For the first term we have that,
\begin{equation}
\mathbb{P}_{L, \alpha,\lambda}  ( \gamma_o =  \tilde{\gamma}, \, \,  \, \,  |H \, \, \cap \, \, \tilde{\gamma} | <  q  \, \|\tilde{\gamma}\|  \,  ) 
\leq e^{- ( \alpha +  (1-q) \lambda) \| \tilde{\gamma}\| }.
\end{equation}
This  follows from the definition of  $\mathbb{P}_{L, \alpha, \lambda} $.
We now bound  the second term from above. 
For this, we will choose $q$ close to one. Recall that $\zeta$ denotes the empty polygon.
We find
\begin{align}
& \mathbb{P}_{L, \alpha, \lambda} ( \gamma_o =  \tilde \gamma, \, \,  \, \,  |H \, \, \cap \, \, \tilde{\gamma} | \geq  q  \, \|\tilde{\gamma} \|  \,  ) \nonumber
\\ \leq & e^{- \alpha \| \tilde{\gamma} \|} \, \,  \mathbb{P}_{L, \alpha, \lambda}  ( \gamma_o =  \zeta , \, \,  \, \,  |H \, \, \cap \, \, \tilde{\gamma} | \geq  q  \,  \| \tilde{\gamma} \|  \,  ) \nonumber
 \\ = &
e^{- \alpha \| \tilde{\gamma} \|}  \, \sum\limits_{ A \subset \tilde{\gamma}, |A| \geq  q \| \tilde{\gamma} \| } 
\mathbb{P}_{L, \alpha, \lambda}   ( \, \gamma_o = \zeta,   H \, \, \cap \, \, \tilde{\gamma} = A \,  ) .\label{eq:expression1}
\end{align}
Now observe that, for $A \subset \tilde{\gamma}$,
\begin{align}
& \mathbb{P}_{L, \alpha, \lambda}  (   \gamma_o = \zeta, \, H \, \, \cap \, \, \tilde{\gamma} = A   ) \nonumber \\
\leq   &
\mathbb{P}_{L, \alpha, \lambda} \Big  (  \big \{  \forall x \in \Lambda_L \setminus A, \gamma_x \cap A = \emptyset \big   \}  \, \, \cap  \, \, \big  \{  \forall x \in A, \gamma_x = \zeta  \big \} \Big  )  \, \nonumber \\
= &
\frac{{Z}( \Lambda_L \setminus A)}{{Z}( \Lambda_L)}.\label{eq:expression2}
\end{align}
If at least $q \| \tilde{\gamma} \|$ vertices of $\tilde{\gamma}$ are not visited by any polygon, then the worst-case bound  leads to the conclusion that we can find at least $\frac{3q - 2}{2} \,  \|\tilde{\gamma} \|$ non-overlapping pairs of sites $ \{x_i, y_i\} \subset \tilde{\gamma}$,
$i \in [1, \frac{3q - 2}{2} \,  \|\tilde{\gamma} \|]$, which are not visited by any polygon.
Thus, by using (\ref{eq:expression2}) in (\ref{eq:expression1}) and applying Lemma \ref{lemma:estimatepartition},  we obtain that,
\begin{align}
\mathbb{P}_{L, \alpha, \lambda}  ( \gamma_o =  \tilde{\gamma}, \, \,  \, \,  |H \, \, \cap \, \, \tilde{\gamma} | \geq  q  \, \| \tilde{\gamma} \|  \,  ) 
& \leq e^{- \alpha \| \tilde{\gamma} \|}
\left( \frac{1}{  1 + e^{-2 \alpha} } \right)^{\frac{3q - 2}{2} \,  \| \tilde{\gamma} \|} \, \, 
\sum\limits_{n= \lceil q \Vert \tilde{\gamma} \Vert\rceil }^{\| \tilde{\gamma}\|  } \binom{ \| \tilde{\gamma} \|}{n} \\
& \leq  \| \tilde\gamma \| \, \, e^{- \alpha \| \tilde{\gamma} \|}
\left( \frac{1}{  1 + e^{-2 \alpha} } \right)^{\frac{3q - 2}{2} \,  \|\tilde{\gamma}\|} \, e^{(1-q) \, [ \, 1 - \log (1-q) \, ] \,  \| \tilde\gamma \|} ,
\end{align}
where we used 
$\binom{ \| \tilde\gamma \|}{ \lceil q \| \tilde\gamma \| \rceil} = 
\binom{ \| \tilde\gamma \|}{ \| \tilde\gamma \| \, - \,\lceil q \|  \tilde\gamma \| \rceil} \leq \big ( e /(1-q)\big )^{(1-q) \| \tilde\gamma \|}.
$
Putting together the previous bounds, we obtain that, for any $q \in (0, 1)$,  for any $\tilde \gamma \in {\rm SAP}_o$,
\begin{align*}
\mathbb{P}_{L, \alpha, \lambda}  ( \gamma_o =  \tilde{\gamma} )  & \leq e^{- ( \alpha +  (1-q) \lambda) \| \tilde{\gamma}\| } + 
\| \tilde\gamma \| \, \, e^{- \alpha \| \tilde{\gamma} \| - \frac{3q - 2}{2} \,g  \|\tilde{\gamma}\|} e^{(1-q) \, [ \, 1 - \log (1-q) \, ] \,  \| \tilde\gamma \|} \\
& \leq 
e^{- b_1  \| \tilde{\gamma}\| } + 
\| \tilde\gamma \| \, \, e^{-  b_2   \|\tilde{\gamma}\|},
\end{align*}
where we defined,
\begin{align*}
g & = g(\alpha)  : = \log ( 1 + e^{-2 \alpha}), \\
b_1 & = b_1(\alpha, \lambda, q)  := \alpha + (1-q) \lambda \\
b_2 & = b_2(\alpha, q) :=  \alpha   + \frac{3 q - 2}{2} g -
(1-q) [1 - \log (1-q)].
\end{align*}
Our goal now is to prove that $b_1$ and $b_2$ are strictly larger than $\log \mu$ for an appropriate choice of $q$, $\lambda$ and $\alpha < \log \mu$. 
Since the number of polygons of length $k$ grows like $\sim e^{ \log \mu k}$,  this will give exponential decay of the polygon length. 
First, 
\[ 
\text{let $\alpha^*$ be the solution of $\alpha + g(\alpha)/5 = \log \mu$},
\]
 and note that $\alpha^* < \log \mu$ depends only on $d$.
Now we will choose $q$ close enough to one so that  $b_2 > \log \mu$ for any $\alpha > \alpha^*$. At the same time we need a small $q$ since this makes $b_1$ large.
More precisely, we set the value of $q$ as follows,
\begin{equation}\label{eq:choiceq}
q = \min \{q^{\prime} \in [0, 1] \, :  \,  3q^{\prime}/2 - 1 \geq 1/4   \mbox{ and }   (1-q^{\prime})(1 - \log(1-q^{\prime})) \leq g(\alpha^*) / 30  \} 
\end{equation}
and we note that such a $q$ depends only on $d$ and that $q \in (0, 1)$.
Since  the function  $\alpha + g(\alpha)/4$ is non-decreasing in $\alpha$, for any $\alpha > \alpha^*$,
 we find
\begin{align*}
b_2(\alpha, q) & \geq 
\alpha   + \frac{1}{4} g(\alpha) - \frac{1}{30} g(\alpha^*)
\geq 
\alpha^*   + \frac{1}{4} g(\alpha^*) -
 \frac{1}{30}
 g(\alpha^*) \\
 & \geq 
 \alpha^*   + \frac{1}{5} g(\alpha^*) +  \frac{1}{20}
 g(\alpha^*) -
 \frac{1}{30}
 g(\alpha^*) 
  \geq \log \mu + \frac{1}{60} 
 g(\alpha^*),
\end{align*}
Then, 
for any $k \in \mathbb{N}$,
\begin{align*}
\mathbb{P}_{L, \alpha, \lambda} ( \| \gamma_o \| > k ) &  \leq \sum\limits_{m=k+1}^{\infty} \mathbb{P}_{L, \alpha, \lambda} (  \|\gamma_o\|  = m)  \\
& \leq \sum\limits_{m=k+1}^{\infty} \Big ( |{\rm SAP}_{o,m}|  \, \, e^{-b_1 \,  m } 
+   |{\rm SAP}_{o,m}|   \, m \, e^{- b_2 \,  m }  \Big )  \\
& \leq 2 (d-1) \, \sum\limits_{m=k+1}^{\infty} m  e^{(- b_1 + \log \mu) m }
+
2 (d-1) \, \sum\limits_{m=k+1}^{\infty} m^2 e^{(- b_2 + \log \mu) m} \\
& \leq  
2 (d-1) \, \sum\limits_{m=k+1}^{\infty} m  e^{(- \alpha - (1-q) \lambda + \log \mu) m }
+
2 (d-1) \, \sum\limits_{m=k+1}^{\infty} m^2 e^{- \frac{g(\alpha^*)}{60} m}.
\end{align*}
Now set
\begin{equation}
\label{alphaCformula}
\alpha_c(\lambda) : = \big (   \, 
\log \mu - (1-q) \lambda \, 
 \vee   \,  \alpha^* \big ),
\end{equation}
From the previous bound we obtain that  for any $\lambda \geq 0$ and $\alpha > \alpha_c(\lambda)$ there exist two constants $\delta, C \in (0, \infty)$ such that,
for any $k \in \mathbb{N}$,
$$
\mathbb{P}_{L, \alpha, \lambda} ( \| \gamma_o \| > k )  \leq
C \, e^{-   \delta \, k}.
$$
Since $\alpha_c(\lambda) < \log \mu$ for any $\lambda \in (0, \infty)$, this concludes the proof.

\subsection{Proof of Theorem \ref{theo:weakly space filling}}
\label{sect:proof of Theo3}

In this section, we show that there is a regime in which a self-avoiding polygon is infinite in the infinite-volume limit and space-filling in a quantifiable sense. 
We will  adopt the strategy developed in  \cite{Copin}, which consists of showing that, under appropriate assumptions, it is much more likely that a self-avoiding polygon enters a certain large area than that it gets only close to it without entering it. 
The way to get there is to consider, for any self-avoiding polygon $\gamma_o$ getting close to the area $F$ without entering it (see also Figure \ref{Fig:extensions}), the cumulative weight of certain extensions  $\tilde  \gamma$  entering the area.
\begin{figure}
\centering
\includegraphics[scale=0.42]{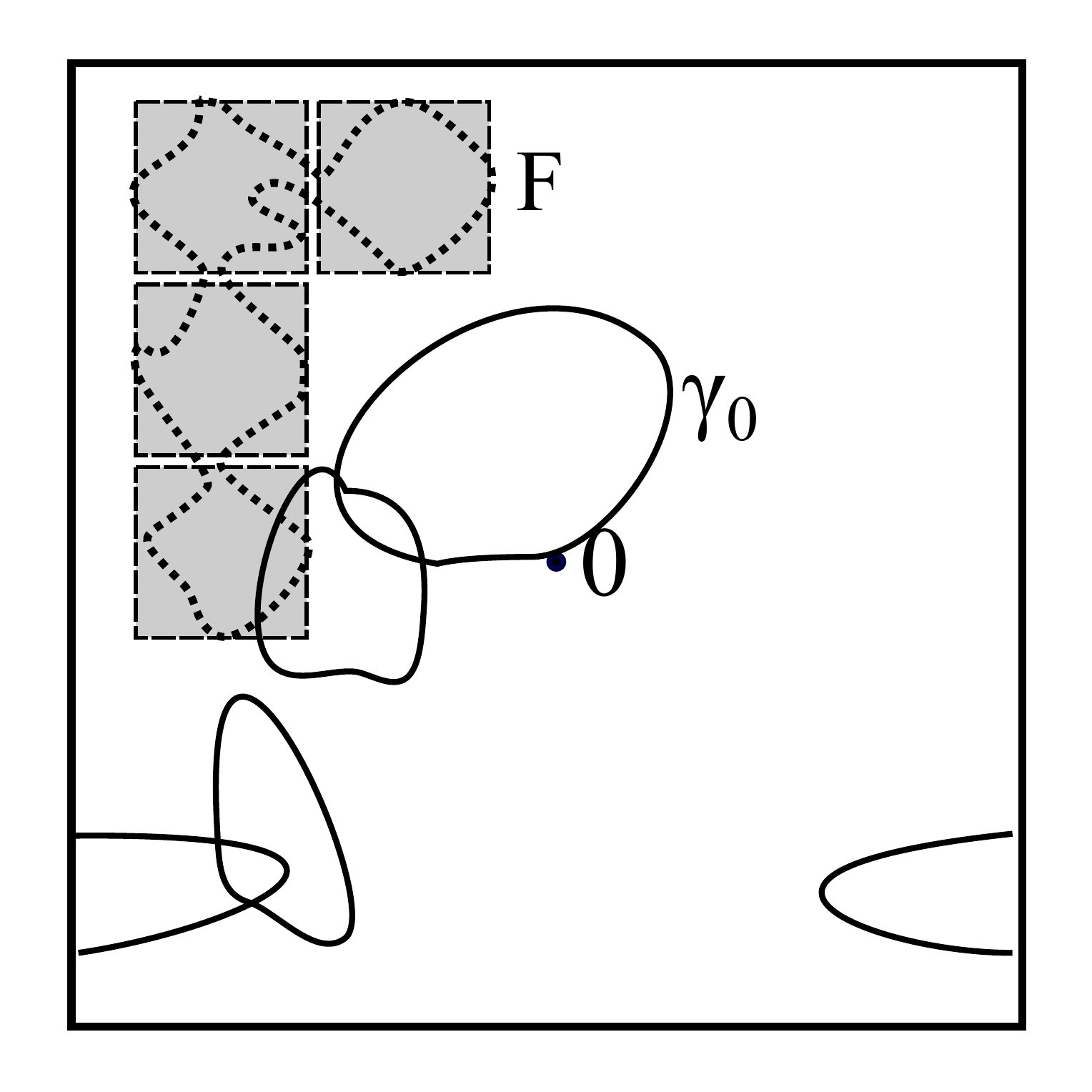}
\caption{The dotted path represents a self-avoiding polygon $\gamma_F$ touching all midpoints of the boundary sides of the boxes,   $F$ is the collection of boxes in the figure.  The self-avoiding polygon $\gamma_o$ does not enter $F$, but it reaches a neighbouring box of $F$.}
\label{Fig:extensions}
\end{figure}
One needs to show that the ``typical'' weight of such extensions is not less than  $ e^{- c \Vert \tilde \gamma \Vert}$ for some $c < \log \mu$. Since the number of the extensions of length $n$ grows like $\sim \mu^n$, this  guarantees that the entropy beats the energy cost for constructing such extensions.  
This strategy is implemented in the proof of Proposition \ref{prop: sufficient condition}, which uses a multi-valued map principle.
We will start by giving an outline of the approach developed in \cite{Copin} and state Proposition \ref{prop: sufficient condition}, from which we deduce the theorem.
Then we will prove that the assumptions of Proposition \ref{prop: sufficient condition} hold in the relevant cases.

As in \cite{Copin}, our presentation of the method will be restricted to the 2-dimensional case. Generalizations to higher dimensions are possible along the lines of \cite[Remark 8]{Copin}. We now start to introduce the key concepts from \cite{Copin}.
 To facilitate calculations, we introduce the notion of boxes and require that relevant extensions respect their structure, which allows us to apply Proposition \ref{prop:Zm} and Lemma \ref{lem:supermult} below.

\begin{definition}\label{def:Pm}
Let $P_m$ be the set of self-avoiding polygons in the square box $[0,2m+1]^2\subset \Lambda_L$ that touch the middle of every face of the square, i.e., they contain the edges $((m,0),(m+1,0))$, $((2m+1,m),(2m+1,m+1))$, $((m,2m+1),(m+1,2m+1))$, and $((0,m),(0,m+1))$. Let further $Z_m(x)=\sum_{\gamma\in P_m}\frac{x^{\left\Vert\gamma\right\Vert}}{\left\Vert\gamma\right\Vert \vee 1}$ for any $x>0$.
\end{definition}
Note that our definition deviates from the one given in \cite{Copin} by introducing the denominator $\left\Vert\gamma\right\Vert \vee 1$, which does not fundamentally change the behaviour of $Z_m(x)$, however. We therefore arrive at

\begin{proposition}(\cite[Proposition 3]{Copin})\label{prop:Zm}
Let $x>1 / \mu$. Then $\limsup_{m\to\infty}Z_m(x)=\infty$.
\end{proposition}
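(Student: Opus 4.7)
The proof follows the approach of \cite[Proposition 3]{Copin}, with the observation that the only difference in our definition is the denominator $\|\gamma\| \vee 1$ in the summand. Since every $\gamma \in P_m$ lies inside a box of $(2m+2)^2$ vertices, $\|\gamma\| \leq (2m+2)^2$, so this denominator contributes at most a polynomial factor in $m$. It therefore suffices to show that the original partition function $\tilde Z_m(x) := \sum_{\gamma \in P_m} x^{\|\gamma\|}$ of \cite{Copin} grows super-polynomially in $m$ for $x > 1/\mu$, since then $Z_m(x) \geq (2m+2)^{-2} \tilde Z_m(x)$ tends to infinity along a suitable subsequence.

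First I would prove an approximate super-multiplicativity estimate of the form
\[
\tilde Z_{m_1 + m_2 + c}(x) \; \geq \; C(x) \, \tilde Z_{m_1}(x) \, \tilde Z_{m_2}(x),
\]
with an absolute constant $c$ and a positive constant $C(x)$. The construction is a standard gluing (essentially Lemma \ref{lem:supermult}): given $\gamma_i \in P_{m_i}$ for $i = 1, 2$, place two boxes of side $2m_i+1$ sharing a common face, remove the pair of opposing middle edges at that shared face from each polygon, and join the two resulting arcs into a single self-avoiding polygon that touches the midpoint of every face of the enlarged box. This is well-defined precisely because every polygon in $P_{m_i}$ uses the midpoint edges by assumption, and the total length is preserved up to a bounded additive correction.

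Next I would apply Fekete's lemma to this super-multiplicative sequence to conclude that $\rho(x) := \lim_m \tilde Z_m(x)^{1/m}$ exists in $[0, \infty]$, and lower-bound $\rho(x)$ by $\mu x$ via a comparison with the full self-avoiding polygon generating function $\sum_n |{\rm SAP}(n)| x^n$. The comparison proceeds by showing that a positive fraction of polygons counted by $|{\rm SAP}(n)|$ can be fitted into a box of side comparable to $\sqrt{n}$ and then locally deformed at a bounded additive cost so as to pass through the midpoints of all four boundary faces, yielding an element of $P_m$ with $m \asymp \sqrt{n}$. Combined with $\sum_n |{\rm SAP}(n)| x^n = \infty$ for $x > 1/\mu$, this gives $\rho(x) \geq \mu x > 1$, whence $\tilde Z_m(x)$ grows exponentially in $m$, which comfortably dominates the polynomial correction.

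The main obstacle lies in this comparison with the unrestricted polygon count, that is, in quantifying how many polygons of length $n$ can be housed inside a box of side $\asymp \sqrt{n}$ and modified into elements of $P_m$ at an additive length cost that is $o(n)$. This is the technical core of \cite{Copin} and is unaffected by our modification; once it is in place, the polynomial correction coming from the factor $\|\gamma\| \vee 1$ is absorbed without difficulty, and the conclusion $\limsup_m Z_m(x) = \infty$ follows.
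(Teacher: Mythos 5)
Your reduction step is fine: every $\gamma\in P_m$ has $\|\gamma\|\le(2m+2)^2$, so $Z_m(x)\ge (2m+2)^{-2}\tilde Z_m(x)$ and it suffices that the unmodified sums $\tilde Z_m(x)$ grow superpolynomially along a subsequence; this matches what the paper actually does for this proposition, namely cite \cite[Proposition 3]{Copin} and observe that the extra factor $\|\gamma\|\vee 1$ is only polynomial in $m$ (which, as you correctly sense, requires inspecting the proof in \cite{Copin} rather than only its statement). The genuine gap is in your substitute for that hard part. Your key comparison claim --- that a positive fraction of the polygons counted by $|{\rm SAP}(n)|$ fit into a box of side $\asymp\sqrt{n}$ and can then be corrected at small additive cost into elements of $P_m$ with $m\asymp\sqrt n$ --- is unjustified and almost certainly false. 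Confining a length-$n$ polygon to a box of area $O(n)$ is a positive-density constraint; typical polygons are believed to have diameter of order $n^{3/4}$, and the exponential growth rate of polygons confined at positive density is expected to be strictly below $\mu$, so the fraction is exponentially small rather than positive. At best such a comparison would give $\rho(x)\ge \lambda x$ for some $\lambda<\mu$, which says nothing for $1/\mu<x\le 1/\lambda$, i.e.\ precisely the regime where the proposition is nontrivial. Nor can you defer this step to \cite{Copin}: their Proposition 3 is exactly the statement being proved, and its proof does not proceed via such a confinement estimate, so attributing this claim to their ``technical core'' is both inaccurate and essentially circular.

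A secondary problem is the supermultiplicativity step. Gluing $\gamma_1\in P_{m_1}$ and $\gamma_2\in P_{m_2}$ through a shared pair of cardinal edges produces a polygon living in a roughly $\bigl((2m_1+2)+(2m_2+2)\bigr)\times(2\max_i m_i+2)$ rectangle, and it need not touch the midpoints of the faces of any square box of side about $2(m_1+m_2)+1$; forcing it to do so costs $O(m_1+m_2)$ additional edges, i.e.\ a factor $x^{O(m)}$ rather than a constant $C(x)$, which for $1/\mu<x<1$ spoils the clean Fekete argument (this is also why Lemma \ref{lem:supermult} is stated for connected families $F$ with the external-cardinal-edge condition, not as supermultiplicativity of $Z_m$ in $m$). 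In short, the paper itself does not reprove this proposition but imports it, and the only new content there is the harmlessness of the polynomial denominator; your proposal reproduces that easy observation correctly but replaces the substantive part of \cite{Copin} with an argument whose central claim does not hold.
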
 

\begin{definition}
Let $m>0$. The cardinal edges of a square box $B\subset\Lambda_L$ with side length $2m+1$ are the edges in the middle of every face (cf. Definition \ref{def:Pm}). Two such boxes $B$ and $B^{\prime}$ are called adjacent if they are disjoint and if there are cardinal edges $(x,y)$ of $B$ and $(w,z)$ of $B^{\prime}$ such that $x\sim w$ and $y\sim z$. A family $F$ of such boxes is connected if every two boxes in $F$ can be connected by a path of adjacent boxes in $F$ (see \cite[Figure 5]{Copin}). Let $\mathcal{F}\left(\Lambda_L,m \right)$ be the set of connected families of boxes of side length $2m+1$ included in $\Lambda_L$. For $F\in\mathcal{F}\left(\Lambda_L,m \right)$, let $\mathcal{V}_F$ be the set of vertices of boxes in $F$ and $\mathcal{E}_F$ be the set of edges with both end-points in $\mathcal{V}_F$. Let $\mathcal{EC}_F$ the set of external cardinal edges of $F$ and $S_F$ the set of self-avoiding polygons in $\mathcal{E}_F$ that visit all edges in $\mathcal{EC}_F$. Let further $Z_F \left(x\right)=\sum_{\gamma\in S_F}\frac{x^{\left\Vert\gamma\right\Vert}}{\left\Vert\gamma\right\Vert \vee 1}$. For the sake of simplicity, we fix coordinates $\left\{0,1,2,...,L-1\right\}^2$ for $\Lambda_L$ and only consider boxes with their lower left corner in $(2m+2)\mathbb{Z}^2$ with respect to these coordinates. For two subsets $A,B\subset \Lambda_L$, we then define the box distance (denoted by $\mathrm{boxdist}$) between them as the size of the smallest set of connected boxes containing one vertex in $A$ and one in $B$ minus $1$.
\end{definition}

\begin{lemma}(Claim in the proof of \cite[Proposition 7]{Copin})\label{lem:supermult}
Let $F\in\mathcal{F}(\Lambda_L,m)$ and $m\geq 2$. Then $Z_F(x)\geq Z_m(x)^{\left\vert F\right\vert}$, where $\vert F\vert$ denotes the cardinality of $F$.
\end{lemma}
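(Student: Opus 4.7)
The plan is to argue by induction on $|F|$, following the supermultiplicativity strategy of \cite{Copin} and tracking the effect of the new denominator $\|\gamma\| \vee 1$. The base case $|F| = 1$ is immediate from $S_F = P_m$. For the inductive step, I would choose a leaf $B$ of some spanning tree of the adjacency graph of $F$; this exists because $F$ is connected, and it ensures $F' := F \setminus \{B\} \in \mathcal{F}(\Lambda_L, m)$. Let $B'$ be the unique neighbour of $B$ along the chosen tree edge, so that $B$ and $B'$ share a face carrying two cardinal edges $(x,y)$ (on $B$'s side, which every $\gamma_B \in P_m$ must contain) and $(w,z)$ (on $B'$'s side, which every $\gamma' \in S_{F'}$ must contain), together with two bridge edges $(x,w)$ and $(y,z)$ lying in $\mathcal{E}_F$. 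Define a merging map $\Psi : S_{F'} \times P_m \to S_F$ by removing the two cardinal edges and inserting the two bridge edges; this splices the two cycles into a single self-avoiding polygon $\Gamma := \Psi(\gamma',\gamma_B)$ of length $\|\gamma'\| + \|\gamma_B\|$, and $\Gamma \in S_F$ because $\mathcal{EC}_F = (\mathcal{EC}_{F'} \setminus \{(w,z)\}) \cup (\mathcal{EC}_B \setminus \{(x,y)\})$, so every edge of $\mathcal{EC}_F$ is still present.

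Two features of $\Psi$ then drive the inequality. First, $\Psi$ is injective: any $\Gamma$ in its image uses the two bridges and no other edge of $\mathcal{E}_F$ with one endpoint in $B$ and the other in $F'$ (because $\gamma_B \subset \mathcal{E}_B$ and $\gamma' \subset \mathcal{E}_{F'}$), so removing those bridges and reinserting the two cardinal edges uniquely recovers the preimage $(\gamma',\gamma_B)$. Second, any polygon in $P_m$ contains the four cardinal edges and hence has length at least four, so $(\|\gamma'\|-1)(\|\gamma_B\|-1) \geq 1$, equivalently $\|\gamma'\|\|\gamma_B\| \geq \|\gamma'\| + \|\gamma_B\| = \|\Gamma\|$, whence $1/\|\Gamma\| \geq 1/(\|\gamma'\|\|\gamma_B\|)$. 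Using $x^{\|\Gamma\|} = x^{\|\gamma'\|}\, x^{\|\gamma_B\|}$ and summing the weight of $\Gamma$ over the image of $\Psi$ then gives
\[
Z_F(x) \;\geq\; \sum_{(\gamma',\gamma_B) \in S_{F'} \times P_m} \frac{x^{\|\gamma'\|}}{\|\gamma'\|}\cdot\frac{x^{\|\gamma_B\|}}{\|\gamma_B\|} \;=\; Z_{F'}(x)\,Z_m(x),
\]
which combined with the inductive hypothesis $Z_{F'}(x) \geq Z_m(x)^{|F|-1}$ closes the induction.

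The main technical subtlety I anticipate is compatibility with the directed-polygon convention of Section~2: a blind bridge swap on directed edges can violate the in/out-degree balance at $x,y,w,z$ depending on the orientations of the two cardinal edges. I would handle this by reading the edge-containment clause in Definition~\ref{def:Pm} as referring to the underlying undirected edges, so that $P_m$ and $S_F$ are closed under global orientation reversal, and by composing $\Psi$ with an orientation-reversal of $\gamma_B$ whenever the two cardinal edges happen to be parallel rather than antiparallel (there are always exactly two bridge orientations that produce a valid directed cycle, and picking one as a function of $(\gamma',\gamma_B)$ leaves $\Psi$ injective). The hypothesis $m \geq 2$ provides enough room in each box for the construction to remain inside $\mathcal{E}_F$, as in \cite{Copin}; edge counts, injectivity, and the arithmetic inequality on the denominators are unaffected, and the induction goes through.
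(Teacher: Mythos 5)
Your core argument is the one the paper itself appeals to: its proof of Lemma \ref{lem:supermult} is a one-line deferral to the gluing argument behind the claim in the proof of \cite[Proposition 7]{Copin}, plus the remark that the new denominator is absorbed by $\tfrac{1}{m_1+m_2}\geq\tfrac{1}{m_1 m_2}$ for $m_1,m_2\geq 2$; your leaf-of-a-spanning-tree splice through the facing cardinal edges and the two bridge edges, the injectivity of that splice, and the observation $(\Vert\tilde\gamma'\Vert-1)(\Vert\tilde\gamma_B\Vert-1)\geq 1$ are exactly that argument written out in full. (Minor cosmetic point: your displayed identity for $\mathcal{EC}_F$ should be an inclusion when the removed box is adjacent to several boxes of $F\setminus\{B\}$, but the inclusion is all you use.)

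The one step that does not hold as stated is your orientation repair. If $P_m$ and $S_F$ are read as sets of directed polygons closed under reversal and you compose the splice with a reversal of $\tilde\gamma_B$ whenever the two cardinal edges are parallel, the resulting map is not injective: the pairs $(\tilde\gamma',\tilde\gamma_B)$ and $(\tilde\gamma',\tilde\gamma_B\text{ reversed})$ collide, since exactly one of the two triggers the reversal and both are spliced onto the same directed cycle; more structurally, the splice forgets the relative-orientation bit, and a counting argument (directed pairs are four times the unoriented count, directed splices only twice) shows no splice-valued map can be injective in the directed convention. As written, your induction therefore yields $Z_F(x)\geq 2^{-(\vert F\vert-1)}Z_m(x)^{\vert F\vert}$ rather than the stated bound. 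This is easy to live with: either read the lemma, as the paper implicitly does, over unoriented polygons exactly as in \cite{Copin} (in which case your proof is complete as it stands), or keep the directed convention and note that the lost constant is irrelevant downstream, since Proposition \ref{prop: sufficient condition} only uses $\limsup_{m\to\infty}Z_m(x)=\infty$ from Proposition \ref{prop:Zm}. Everything else, including the length bookkeeping and the denominator inequality, is correct.
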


\begin{proof}
Taking note of $\frac{1}{m_1+m_2}\geq\frac{1}{m_1\,m_2}$ for $m_1,m_2\geq 2$, one sees that the proof in \cite{Copin} still works.
\end{proof}

\begin{definition}\label{def:concatenate}
Let $\tilde{\gamma}_1$ and $\tilde{\gamma}_2$ be self-avoiding polygons such that either $\mathcal{V}(\tilde{\gamma}_1)\cap\mathcal{V}(\tilde{\gamma}_2)=\left\{v_1,v_2\right\}$ and  $\mathcal{E}(\tilde{\gamma}_1)\cap\mathcal{E}(\tilde{\gamma}_2)=\{(v_1,v_2)\}$ or $\mathcal{V}(\tilde{\gamma}_1)\cap\mathcal{V}(\tilde{\gamma}_2)=\left\{v_1,v_2,v_3\right\}$ and  $\mathcal{E}(\tilde{\gamma}_1)\cap\mathcal{E}(\tilde{\gamma}_2)=\{(v_1,v_2),(v_2,v_3)\}$ with distinct vertices $v_1,v_2,v_3$ hold. We then define $\tilde{\gamma}_1\sqcup\tilde{\gamma}_2$ via its edges $\mathcal{E}(\tilde{\gamma}_1\sqcup\tilde{\gamma}_2):=\mathcal{E}(\tilde{\gamma}_1)\Delta\mathcal{E}(\tilde{\gamma}_2)$, where $\Delta$ denotes the symmetric difference.
\end{definition}
Note that $\tilde{\gamma}_1\sqcup\tilde{\gamma}_2$ is again a self-avoiding polygon with either $\Vert\tilde{\gamma}_1\sqcup\tilde{\gamma}_2\Vert =\Vert\tilde{\gamma}_1\Vert +\Vert\tilde{\gamma}_2\Vert-2$  or $\Vert\tilde{\gamma}_1\sqcup\tilde{\gamma}_2\Vert =\Vert\tilde{\gamma}_1\Vert +\Vert\tilde{\gamma}_2\Vert-4$.

Let further $\Theta_F$ be the set of self-avoiding polygons containing the origin which do not intersect $F$, but reach a neighbouring box. For $\tilde{\gamma}_o\in\Theta_F$, let $e$ be an external cardinal edge of $F$ in box distance $1$ from $\tilde{\gamma}_o$ and let $l(\tilde{\gamma}_o)$ be a self-avoiding polygon such that
\begin{itemize}
\item $\mathcal{E}_F \cap \mathcal{E}(l(\tilde{\gamma}_o))=\{e\}$,
\item $l(\tilde{\gamma}_o)$ intersects $\tilde{\gamma}_o$ either at exactly one edge or at exactly two adjacent edges, and
\item $l(\tilde{\gamma}_o)$ has length smaller than $100m$.
\end{itemize}
It can be checked that such a polygon always exists (cf. \cite[p.10]{Copin}). Now define $f:\Theta_F \times S_F\longrightarrow {\rm SAP}_o$ by
$$
f\left(\tilde{\gamma}_o,\tilde{\gamma}_F \right)= \tilde{\gamma}_o \sqcup \left( \tilde{\gamma}_F \sqcup l(\tilde{\gamma}_o)\right).
$$ 
It can be shown (see \cite{Copin}) that $f$  is at most $100m^2$ to $1$.
See also Figure \ref{Fig:extensions} for a depiction of typical examples of $\tilde{\gamma}_o$ and $\tilde{\gamma}_F$.

We start by identifying an inequality that is sufficient for proving our result. 
We will then show the validity of that inequality in the different regimes by 
different methods.

\begin{proposition}\label{prop: sufficient condition}
Let $\alpha, \lambda \in \bbR$. Assume that we can find 
$x > 1/\mu$ and  $C>0$ such that 
\begin{equation}
	\label{eq:assumption}
	\mathbb{P}_{L,\alpha,\lambda}\left[\gamma_o=f\left(\tilde{\gamma}_o,\tilde{\gamma}_F\right)\right] 
\geq
C\, \mathbb{P}_{L,\alpha,\lambda}\left[\gamma_o=\tilde{\gamma}_o\right] \frac{x^{\left\Vert\tilde{\gamma}_F\right\Vert}}{\left\Vert\tilde{\gamma}_F\right\Vert}
\end{equation}
holds for all $L>0$, and for all $F\in\mathcal{F}\left(\Lambda_L,m\right)$, $\tilde{\gamma}_o\in\Theta_F$ and $\tilde{\gamma}_F\in S_F$ with large enough $m$. Then equation \eqref{eq: weakly space filling} holds for this pair 
$(\alpha,\lambda)$. 
\end{proposition}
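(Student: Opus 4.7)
I would follow the strategy of \cite{Copin}: the hypothesis \eqref{eq:assumption} is tailored precisely so that a multi-valued map argument produces an exponential-in-$|F|$ upper bound on $\mathbb{P}_{L,\alpha,\lambda}[\gamma_o\in\Theta_F]$, valid uniformly over connected families of boxes $F\in\mathcal{F}(\Lambda_L,m)$. The space-filling conclusion \eqref{eq: weakly space filling} then comes from a deterministic geometric step combined with a union bound over the possible positions of $F$.

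The first step is the core estimate. Fix $x>1/\mu$ and $m$ as in the hypothesis, fix $F\in\mathcal{F}(\Lambda_L,m)$, and sum \eqref{eq:assumption} over $\tilde{\gamma}_o\in\Theta_F$ and $\tilde{\gamma}_F\in S_F$. Using the definition of $Z_F(x)$, the right hand side equals $C\,\mathbb{P}_{L,\alpha,\lambda}[\gamma_o\in\Theta_F]\,Z_F(x)$. On the left, since $f:\Theta_F\times S_F\to {\rm SAP}_o$ is at most $100m^2$-to-$1$,
\[
\sum_{\tilde{\gamma}_o\in\Theta_F,\,\tilde{\gamma}_F\in S_F}\mathbb{P}_{L,\alpha,\lambda}[\gamma_o=f(\tilde{\gamma}_o,\tilde{\gamma}_F)]\;\leq\;100m^2\sum_{\gamma\in{\rm SAP}_o}\mathbb{P}_{L,\alpha,\lambda}[\gamma_o=\gamma]\;\leq\;100m^2.
\]
Combining with Lemma \ref{lem:supermult} (super-multiplicativity $Z_F(x)\geq Z_m(x)^{|F|}$) yields the key bound
\[
\mathbb{P}_{L,\alpha,\lambda}[\gamma_o\in\Theta_F]\;\leq\;\frac{100\,m^2}{C\,Z_m(x)^{|F|}},
\]
and by Proposition \ref{prop:Zm} the base $Z_m(x)$ can be made arbitrarily large by choosing $m$ large.

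The second step is a deterministic geometric lemma: provided $\xi$ is chosen large enough in terms of $m$ (roughly $\xi\gtrsim m$), the event $\{|A^\xi_L(\gamma_o)|>c\log L\}$ forces the existence of $F\in\mathcal{F}(\Lambda_L,m)$ with $|F|\geq c_1\log L$ such that $\gamma_o\in\Theta_F$, where $c_1=c_1(c,m)$. The idea is to collect all boxes of the $(2m+2)\mathbb{Z}^2$-lattice that are entirely contained in $A^\xi_L(\gamma_o)$, take a box-connected component $F$ of this collection lying at the boundary of $A^\xi_L(\gamma_o)$, and use the maximality of $A^\xi_L$ to show that $\gamma_o$ must come within distance $O(m)$ of $F$ and therefore reach a box adjacent to $F$. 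The count $|F|\gtrsim|A^\xi_L(\gamma_o)|/m^2$ follows from comparing volumes.

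The third and final step is a union bound. The number of connected families $F\in\mathcal{F}(\Lambda_L,m)$ of cardinality $n$ is at most $|\Lambda_L|\,a_m^n$ for some constant $a_m$ depending only on $m$ (a standard lattice-animal count on the box-adjacency graph). Combining with Step 1,
\[
\mathbb{P}_{L,\alpha,\lambda}\bigl[|A^\xi_L(\gamma_o)|>c\log L\bigr]\;\leq\;\frac{100\,m^2}{C}\,|\Lambda_L|\sum_{n\geq c_1\log L}\Bigl(\frac{a_m}{Z_m(x)}\Bigr)^{\!n}.
\]
Using Proposition \ref{prop:Zm} I choose $m$ so large that $Z_m(x)>2a_m$ and that $c_1\log(Z_m(x)/a_m)$ exceeds the polynomial growth rate of $|\Lambda_L|$; then the right-hand side tends to $0$ as $L\to\infty$, proving \eqref{eq: weakly space filling}.

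The main obstacle is Step 2: while Step 1 is a direct unpacking of the hypothesis and Step 3 is routine, the geometric passage from ``$A^\xi_L(\gamma_o)$ large'' to ``there exists an honest $F\in\mathcal{F}(\Lambda_L,m)$ with $\gamma_o\in\Theta_F$ of comparable size'' requires calibrating $\xi$ against $m$ so that boxes in $F$ stay entirely outside $\gamma_o$, the collection is connected in the box-adjacency sense of \cite{Copin}, and the maximality of the component $A^\xi_L$ simultaneously forces $\gamma_o$ to reach one of its neighbouring boxes.
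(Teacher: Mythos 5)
Your argument is correct and is essentially the paper's proof: your Step 1 (the multi-valued map bound combined with Lemma \ref{lem:supermult}, yielding $\mathbb{P}_{L,\alpha,\lambda}\left[\gamma_o\in\Theta_F\right]\leq 100\,m^2\,C^{-1}\,Z_m(x)^{-\left|F\right|}$) is exactly the computation given there, and your Steps 2--3 are precisely the geometric and union-bound part that the paper delegates to the proof of \cite[Theorem 6]{Copin}, with $\xi=6m$. One minor remark: since your $c_1\sim c/m^2$ shrinks as $m$ grows, the final tuning of constants is cleaner as in the paper --- fix $m$ so that $Z_m(x)$ is large via Proposition \ref{prop:Zm} and then take the free constant $c$ in \eqref{eq: weakly space filling} large enough to beat the polynomial volume factor --- rather than asking the choice of $m$ alone to do this.
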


\begin{proof}
Since $f$ is at most $100m^2$ to $1$, we have
\begin{equation}\label{eq:boxdist}
1  \geq \mathbb{P}_{L,\alpha,\lambda}\left[\gamma_o \in f\left(\Theta_F \times S_F\right)\right]
 \geq \frac{1}{100m^2}\sum_{\tilde{\gamma}_o\in\Theta_F ,\tilde{\gamma}_F\in S_F} \mathbb{P}_{L,\alpha,\lambda}\left[\gamma_o=f\left(\tilde{\gamma}_o,\tilde{\gamma}_F\right)\right].
\end{equation}
Using assumption \eqref{eq:assumption} and summing up yields
\begin{align*}
\sum_{\tilde{\gamma}_o\in\Theta_F ,\tilde{\gamma}_F\in S_F} \mathbb{P}_{L,\alpha,\lambda}\left[\gamma_o=f\left(\tilde{\gamma}_o,\tilde{\gamma}_F\right)\right]
&\geq C\, \sum_{\tilde{\gamma}_o\in\Theta_F} \mathbb{P}_{L,\alpha,\lambda}\left[\gamma_o=\tilde{\gamma}_o\right] 
\sum_{\tilde{\gamma}_F\in S_F} \frac{x^{\left\Vert\tilde{\gamma}_F\right\Vert}}{\left\Vert\tilde{\gamma}_F\right\Vert}\\
&\geq
C\, \mathbb{P}_{L,\alpha,\lambda}\left[\mathrm{boxdist}\left(\gamma_o,\mathcal{V}_F\right)=1\right]\, Z_F\left(x\right).
\end{align*}
Equation \eqref{eq:boxdist} and Lemma \ref{lem:supermult} then lead to
\begin{align*}
\mathbb{P}_{L,\alpha,\lambda}\left[\mathrm{boxdist}\left(\gamma_o,\mathcal{V}_F\right)=1\right] 
\leq
\tilde{C}\, Z_m\left(x\right)^{-\left|F\right|}
\end{align*}
for each $F\in\mathcal{F}\left(\Lambda_L,m\right)$, with $\tilde C$ only depending on $C$ and $m$.

Recall that $\Gamma_{L}^{\xi}(\gamma_o)$ denotes the set of vertices which have distance at least $\xi$ from
$\gamma_o$. Following the proof of \cite[Theorem 6]{Copin} in applying Proposition \ref{prop:Zm}, we obtain that there is $m\in\mathbb{N}$, $\tilde{c}>0$ as well as $C=C(x,m)>0$ such that for every $N>0$,
\[
\mathbb{P}_{L,\alpha,\lambda}\left[\text{there is a component of }\Lambda_L \setminus \Gamma_L^{6m}\left(\gamma_o\right)\text{ of size }>N\right]\leq C L^2\, e^{-\tilde{c} \,N}.
\]
If we now choose $\xi:=6m$ and suitable $c>0$, then \eqref{eq: weakly space filling} is proved.
\end{proof}

For the proof of inequality \eqref{eq:assumption} we need to distinguish between two cases:  The situation is easier if we assume that $\alpha<\log\mu$. Then we only have to ascertain that the interaction between polygons does not fundamentally change the picture as self-avoiding polygons with weights $e^{-\alpha}$ are space-filling in this case (see \cite[Theorem 1]{Copin}). On the other hand, if $\alpha\geq\log\mu$, we will have to show that there is sufficient attractive interaction with polygons at sites $x\neq o$ to compensate for the high value of $\alpha$. 

The case $\alpha < \log \mu$ of Theorem \ref{theo:weakly space filling} 
is proved by the following Lemma:

\begin{lemma}\label{prop:gluingsmall}
Let $\alpha<\log\mu$ and $\lambda<\log\mu-\alpha$. Then there exist $x > 1/\mu$ 
and $C>0$ so that \eqref{eq:assumption} holds. 
\end{lemma}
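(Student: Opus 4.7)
The plan is to establish the pointwise bound
$$
e^{-\caH_{L,\alpha,\lambda}(f(\tilde\gamma_o,\tilde\gamma_F),(\gamma_x)_{x\neq o})}\;\geq\; C\,\frac{x^{\|\tilde\gamma_F\|}}{\|\tilde\gamma_F\|}\; e^{-\caH_{L,\alpha,\lambda}(\tilde\gamma_o,(\gamma_x)_{x\neq o})}
$$
uniformly in $(\gamma_x)_{x\neq o}\in\prod_{x\neq o}{\rm SAP}_x$, for some $x>1/\mu$ and $C=C(\alpha,\lambda,m)>0$. Summing both sides over $(\gamma_x)_{x\neq o}$ and dividing by $Z(L,\alpha,\lambda)$ then yields \eqref{eq:assumption} directly.

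Writing $\Delta\caH = \alpha\,\Delta\caC + \lambda\,\Delta\caI + \log\|f\| - \log\|\tilde\gamma_o\|$ for the change in Hamiltonian when $\gamma_o$ is replaced by $f(\tilde\gamma_o,\tilde\gamma_F)$, I would bound each of the three terms separately. Since $f$ is built from the symmetric differences of the edge sets of $\tilde\gamma_o$, $\tilde\gamma_F$ and $l(\tilde\gamma_o)$, and $\|l(\tilde\gamma_o)\|\leq 100m$, one immediately has $\Delta\caC\leq \|\tilde\gamma_F\|+100m$. A short case check on the $\sqcup$ operation shows that at most one vertex of $\tilde\gamma_o$ is removed when forming $f$ (the midpoint, in the case where $l(\tilde\gamma_o)$ and $\tilde\gamma_o$ share two adjacent edges); consequently $\caO_x$ can strictly decrease at most at that single site, giving $\Delta\caI\geq -1$, while $f$ has at most $\|\tilde\gamma_F\|+100m$ new vertices, so also $\Delta\caI\leq \|\tilde\gamma_F\|+100m$. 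Finally, $\tilde\gamma_o$ is non-empty with $\|\tilde\gamma_o\|\geq 2$, hence $\log(\|f\|/\|\tilde\gamma_o\|)\leq \log(1+(\|\tilde\gamma_F\|+100m)/2)\leq \log\|\tilde\gamma_F\|+c_1(m)$.

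Setting $\lambda^+:=\max(\lambda,0)$ and combining the three estimates gives
$$
\Delta\caH\;\leq\;(\alpha+\lambda^+)\bigl(\|\tilde\gamma_F\|+100m\bigr)+\log\|\tilde\gamma_F\|+c_2(\alpha,\lambda,m),
$$
which is exactly the desired pointwise bound with $x:=e^{-(\alpha+\lambda^+)}$ and $C:=\exp(-(\alpha+\lambda^+)\cdot 100m-c_2)$. The hypotheses $\alpha<\log\mu$ and $\lambda<\log\mu-\alpha$ guarantee $\alpha+\lambda^+<\log\mu$ in both the attractive regime $\lambda\leq 0$ (where $\lambda^+=0$) and the weakly repulsive regime $0<\lambda<\log\mu-\alpha$ (where $\lambda^+=\lambda$); hence $x>1/\mu$. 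The main technical point is the vertex-set bookkeeping for $\Delta\caI$: it is precisely the fact that $\sqcup$ deletes at most one vertex of $\tilde\gamma_o$ that prevents a potentially negative $\lambda\Delta\caI$ (in the case $\lambda<0$) from destroying the bound, and everything else is a direct energy comparison.
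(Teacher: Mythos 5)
Your argument is correct and follows essentially the same route as the paper's: a pointwise comparison of the Hamiltonians of $(f(\tilde\gamma_o,\tilde\gamma_F),(\gamma_x)_{x\neq o})$ and $(\tilde\gamma_o,(\gamma_x)_{x\neq o})$, with the interaction term controlled by a worst-case bound ($\Delta\caI$ at most the number of new vertices, at least $-1$ from the single possibly-deleted midpoint), followed by summation over $(\gamma_x)_{x\neq o}$; the paper merely organizes this by first stating the bound in terms of $\tilde\gamma_v=\tilde\gamma_F\sqcup l(\tilde\gamma_o)$ and then absorbing the $O(m)$ discrepancy between $\|\tilde\gamma_v\|$ and $\|\tilde\gamma_F\|$, whereas you compare $f$ to $\tilde\gamma_o$ directly. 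One small caveat: because $\alpha$ may be negative, the inequality $\alpha\,\Delta\caC\leq\alpha(\|\tilde\gamma_F\|+100m)$ does not hold as written; what you actually have is $\alpha\,\Delta\caC\leq\alpha\|\tilde\gamma_F\|+O_{\alpha,m}(1)$ (using $\Delta\caC-\|\tilde\gamma_F\|\in[-8,100m-4]$), which is harmless since the discrepancy is $(\alpha,\lambda,m)$-dependent and folds into $c_2$ and $C$ — but the displayed intermediate inequality should be phrased that way.
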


\begin{proof}
Let $\alpha < \log \mu$ and $0 < \lambda < \log \mu - \alpha$, and assume 
that the self-avoiding polygons $\tilde{\gamma}_o$, $\tilde{\gamma}_v$ satisfy
the assumptions of Definition \ref{def:concatenate}.  
Then, by supposing interaction at maximally many sites, we obtain the bound
\begin{align*}
\mathbb{P}_{L,\alpha,\lambda}\left[\gamma_o=\tilde{\gamma}_o\sqcup\tilde{\gamma}_v\right] 
&\geq 
\left(e^{2\alpha}\wedge e^{3\alpha}\right) \,\mathbb{P}_{L,\alpha,\lambda}\left[\gamma_o=\tilde{\gamma}_o\right]\frac{\left\Vert\tilde{\gamma}_o\right\Vert}{\left\Vert\tilde{\gamma}_o\right\Vert+\left\Vert\tilde{\gamma}_v\right\Vert-2}e^{-\left(\alpha+\lambda\right)\left\Vert \tilde{\gamma}_v\right\Vert}\\
&\geq 
\left(e^{2\alpha}\wedge e^{3\alpha}\right) \,\mathbb{P}_{L,\alpha,\lambda}\left[\gamma_o=\tilde{\gamma}_o\right]\frac{e^{-\left(\alpha+\lambda\right)\left\Vert \tilde{\gamma}_v\right\Vert}}{\left\Vert\tilde{\gamma}_v\right\Vert}.
\end{align*}
If $\lambda \leq 0$, we obtain in a similar way that 
\[
\mathbb{P}_{L,\alpha,\lambda}\left[\gamma_o=\tilde{\gamma}_o\sqcup\tilde{\gamma}_v\right] 
\geq 
\left(e^{2\alpha}\wedge e^{3\alpha}\right)e^\lambda\, \mathbb{P}_{L,\alpha,\lambda}\left[\gamma_o=\tilde{\gamma}_o\right]\frac{e^{-\alpha\left\Vert \tilde{\gamma}_v\right\Vert}}{\left\Vert\tilde{\gamma}_v\right\Vert}.
\]
%
%
%
The additional factor of $e^\lambda$ takes into account the second case in Definition \ref{def:concatenate} in which the vertex $v_2$ does not belong to $\tilde{\gamma}_o\sqcup\tilde{\gamma}_v$, but to $\tilde{\gamma}_o$.
Thus with   
\[
x = \exp(- \alpha - (\lambda \vee 0)),
\]
we find that for all $\alpha \in \bbR$, all $\lambda < \log \mu - \alpha$, 
all $F \in \caF(\Lambda_L,m)$ with sufficiently large $m$, all $\tilde{\gamma}_o\in\Theta_F$ and all $\tilde{\gamma}_F\in S_F$, we can 
define  $\tilde{\gamma}_v=\tilde{\gamma}_F \sqcup l(\tilde{\gamma}_o)$ 
and compute 
\begin{align*}
\mathbb{P}_{L,\alpha,\lambda}\left[\gamma_o=f\left(\tilde{\gamma}_o,\tilde{\gamma}_F\right)\right] 
& \geq 
C^\prime\, \mathbb{P}_{L,\alpha,\lambda}\left[\gamma_o=\tilde{\gamma}_o\right]x^{\left\Vert\tilde{\gamma}_v\right\Vert-\left\Vert \tilde{\gamma}_F\right\Vert}\frac{x^{\left\Vert\tilde{\gamma}_F\right\Vert}}{\left\Vert\tilde{\gamma}_v\right\Vert}\\
& =
C^\prime\, \mathbb{P}_{L,\alpha,\lambda}\left[\gamma_o=\tilde{\gamma}_o\right]x^{\left\Vert l\left(\tilde{\gamma}_o\right)\right\Vert -2}\frac{x^{\left\Vert\tilde{\gamma}_F\right\Vert}}{\left\Vert l\left(\tilde{\gamma}_o\right)\right\Vert +\left\Vert\tilde{\gamma}_F\right\Vert -2}\\
&\geq 
C\, \mathbb{P}_{L,\alpha,\lambda}\left[\gamma_o=\tilde{\gamma}_o\right] \frac{x^{\left\Vert\tilde{\gamma}_F\right\Vert}}{\left\Vert\tilde{\gamma}_F\right\Vert},
\end{align*}
where $C' = \left(e^{2\alpha}\wedge e^{3\alpha}\right)(\e{\lambda} \wedge 1\, )$.  
In the last line applies the properties of $ l\left(\tilde{\gamma}_o\right)$ and $C$ therefore only depends on $\alpha,\lambda$, and $m$. 
\end{proof}

\noindent {\em Remark:} If we consider $\tilde{\mathbb{P}}_{L,\rho,\nu}$ instead of $\mathbb{P}_{L,\alpha,\lambda}$, the case given by $\rho<\log\mu$ and $\nu\geq 0$ (which is included in Lemma \ref{prop:gluingsmall}) could also have been proved in the following way: For $\nu =0$ the polygons are independent and \cite{Copin} yields that they are weakly space filling. One further easily shows $\lim_{L\to\infty}\tilde{\mathbb{P}}_{L,\rho,0}\left[\mathcal{N} =0\right]=1$, which also holds for $\nu>0$ due to monotonicity of $\tilde{\mathbb{E}}_{L,\rho,\nu}[\mathcal{N}]$. Hence, $\gamma_o$ behaves asymptotically as in the case of independence.\\

In order to treat the case $\alpha\geq\log\mu$, we have to work somewhat harder. The general strategy of the proof is the same as before.
For any finite set $A \subset \Lambda_L \setminus \{o\}$ and $\tilde{\gamma}_o \in {\rm SAP}_o$, let 
$\Omega^{A}_{o,\tilde{\gamma}_o} \subset \prod_{x \in \Lambda_L} \, \, {\rm SAP}_x$
be the set of configurations with $\gamma_x = \zeta$ for all $x\in A$ and $\gamma_o = \tilde{\gamma}_o$. Moreover, write $\mathbb{P}^{\tilde{\gamma}_o}_{L,\alpha,\lambda}  \left[   \cdot \right] := \mathbb{P}_{L,\alpha,\lambda}\left[  \cdot  \left|\gamma_o =\tilde{\gamma}_o\right. \right]$. 

\begin{lemma}\label{lemma:exponential bound}
Let $\alpha \in \mathbb{R}$ and $0<q < 1$. Then there is $\lambda_0\leq0$ such that for all $\lambda<\lambda_0$ and
for any finite set $A \subset\Lambda_{L}\setminus \{o\}$, $L \in \mathbb{N}$ and $\tilde{\gamma}_o\in {\rm SAP}_o$, we have

\[
\mathbb{P}^{\tilde{\gamma}_o}_{L,\alpha,\lambda}\left[\gamma_{x}=\zeta\text{ for all }x\in A\right] \leq q^{\left|A\right|}.
\]
More specifically, $\lambda_0=\log \left(q/2\right) -2\alpha$.
\end{lemma}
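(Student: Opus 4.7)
\medskip
\noindent\emph{Proof plan.} The plan is to use an injective (possibly multi-valued) map $\phi$ from the event $B := \{\bsgamma : \gamma_o=\tilde\gamma_o,\ \gamma_y=\zeta \text{ for all } y\in A\}$ into $\Omega^{\tilde\gamma_o} := \{\bsgamma:\gamma_o=\tilde\gamma_o\}$, in the spirit of the multi-valued map principle used in the proof of Lemma~\ref{lemma:estimatepartition}. For each $\bsgamma\in B$ and each $x\in A$, replace the empty polygon $\gamma_x=\zeta$ with a length-$2$ polygon $P_{y_x}$ visiting $x$ and a carefully chosen neighbour $y_x$ of $x$. The map is injective since $\bsgamma$ can be recovered from $\phi(\bsgamma)$ by resetting the replaced polygons to $\zeta$, so that
\[
\sum_{\bsgamma\in B}W(\bsgamma) \;\leq\; \Big(\inf_{\bsgamma\in B}\frac{W(\phi(\bsgamma))}{W(\bsgamma)}\Big)^{-1}\; Z^{\tilde\gamma_o}.
\]

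First I would compute the weight ratio for a single replacement. Using the form \eqref{Hamiltonian alpha lambda} of the Hamiltonian and $\Vert P_{y_x}\Vert = 2$, the added polygon contributes an energy cost $2\alpha + \log 2$, plus an overlap contribution $\lambda\,\Delta\caI_x$, where $\Delta\caI_x\in\{0,1,2\}$ counts how many of $x$ and $y_x$ are already visited by a polygon of $\bsgamma$ (in particular by $\gamma_o=\tilde\gamma_o$). Hence
\[
\frac{W(\phi(\bsgamma))}{W(\bsgamma)} \;=\; \prod_{x\in A}\frac{e^{-2\alpha}}{2}\,e^{-\lambda\Delta\caI_x}.
\]
The key step is then to show that $y_x$ can be chosen so that $\Delta\caI_x\geq 1$ for every $x\in A$. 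Granted this, the ratio is at least $(e^{-2\alpha-\lambda}/2)^{|A|}$, which equals $(1/q)^{|A|}$ precisely under the hypothesis $\lambda\leq \log(q/2)-2\alpha=\lambda_0$, and the desired bound $\mathbb{P}^{\tilde\gamma_o}_{L,\alpha,\lambda}[\gamma_y=\zeta\ \forall y\in A]\leq q^{|A|}$ follows.

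To achieve $\Delta\caI_x\geq 1$, I would proceed by a peeling procedure. The conditioning forces $\gamma_o=\tilde\gamma_o$ to be non-empty, so every $x\in A$ adjacent to $\tilde\gamma_o$ can be paired with a $y_x\in\tilde\gamma_o$, immediately giving $\Delta\caI_x\geq 1$. After this first pass, the newly inserted length-$2$ polygons enlarge the ``visited'' set, so that any $x\in A$ adjacent to this enlarged set can be processed next with the same guarantee, and so on inductively. The \textbf{main obstacle} is to handle sites $x\in A$ lying in a connected component of $A\cup\tilde\gamma_o$ (in the adjacency graph) that is disjoint from $\tilde\gamma_o$ and where $x$ has no neighbour in $A$ either, so that no choice of length-$2$ polygon gives $\Delta\caI_x\geq 1$ against the original $\bsgamma$. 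I would treat these sites by replacing the single-valued map with a multi-valued one that pairs such an $x$ with a neighbour $y_x$ and introduces both $\gamma'_x=P_{y_x}$ and $\gamma'_{y_x}=P_x$ simultaneously, creating a mutual overlap of $\geq 2$ at $\{x,y_x\}$; this costs an extra entropy factor that is absorbed into the multiplicity of the map while still yielding a per-site factor $\geq 1/q$ in the sense required by the multi-valued map principle, thereby preserving the bound $\mathbb{P}^{\tilde\gamma_o}[\cdot]\leq q^{|A|}$ with $\lambda_0=\log(q/2)-2\alpha$.
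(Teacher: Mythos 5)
Your overall strategy — build an injective (or controlled-multiplicity) map from the constrained event into $\{\gamma_o=\tilde\gamma_o\}$ by filling each empty root $x\in A$ with a short polygon, and use $\lambda<0$ to make sure each insertion creates enough overlap — is exactly what the paper does. But your case analysis is not the right one, and this produces a genuine gap.

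You partition $A$ according to adjacency inside $A\cup\tilde\gamma_o$. The correct dichotomy, which the paper uses, is based on whether $x$ has a neighbour $y\neq o$ with $\gamma_y\neq\zeta$. Your ``obstacle'' criterion (that $x$ lies in a singleton component of $A\cup\tilde\gamma_o$ disjoint from $\tilde\gamma_o$) does not imply $\gamma_{y_x}=\zeta$: a neighbour $y_x\notin A\cup\tilde\gamma_o$ can perfectly well root a non-empty polygon in the configuration $\bsgamma$. In that event, two things go wrong simultaneously. First, you are overwriting a non-trivial $\gamma_{y_x}$ with $P_x$, and since $\gamma_{y_x}$ can be any of an unbounded number of polygons, the map is not finite-to-one; the ``extra entropy factor absorbed into the multiplicity'' is not a bounded constant, so the multi-valued map principle cannot be invoked. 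Second, you would not even need the double insertion here: if $\gamma_{y_x}\neq\zeta$ then $y_x$ is occupied, so a single length-$2$ polygon at $x$ already gives $\Delta\caI\geq 1$. The paper avoids both problems by splitting on $\gamma_y$: in Case~1 (some $y\sim x$, $y\neq o$, $\gamma_y\neq\zeta$) it inserts a single $\{x,y\}$ polygon, and in Case~2 (all such $\gamma_y=\zeta$) it overwrites $\gamma_x$ and $\gamma_{y_1}$ with two copies of a fixed length-$4$ cycle — and Case~2's hypothesis guarantees $\gamma_{y_1}=\zeta$, so nothing is lost.

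Two further, more minor points. Your proposal does not address components of $A$ with at least two vertices that are disconnected from $\tilde\gamma_o$: your ``peeling'' has no seed there, and your obstacle clause explicitly excludes non-singleton components, so those $x$ are not covered by any branch. (The paper sidesteps this by inducting on $|A|$, removing one $x_A$ at a time that has a neighbour outside $A\cup\{o\}$ and always falling into Case~1 or Case~2.) Also, using two length-$2$ polygons instead of two length-$4$ polygons in the double-insertion step works for the energy bound (it in fact gives the same final $\lambda_0$, since Case~1 is binding), but it creates an ambiguity for injectivity: the output $\gamma'_x$ then has the same length as in Case~1, so one cannot read off from the image which case was applied. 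The paper's choice of length $4$ makes the two cases disjoint at a glance.
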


\begin{proof}
The proof proceeds via induction on the cardinality of $A$. The main
idea is to construct an injective map $g_{A}:\Omega^{A}_{o,\tilde{\gamma}_o}\rightarrow\Omega^{\emptyset}_{o,\tilde{\gamma}_o}$
such that $e^{-\mathcal{H}\left(\boldsymbol{\gamma}\right)}\leq q^{\left|A\right|}e^{-\mathcal{H}\left(g_{A}\left(\boldsymbol{\gamma}\right)\right)}$
for all $\boldsymbol{\gamma}\in\Omega^{A}_{o,\tilde{\gamma}_o}$ because then
\begin{align*}
& \mathbb{P}^{\tilde{\gamma}_o}_{L,\alpha,\lambda}\left[\gamma_{x}=\zeta\text{ for all }x\in A\right]\\
= &
\frac{\sum_{\boldsymbol{\gamma}\in\Omega^{A}_{o,\tilde{\gamma}_o}}e^{-\mathcal{H}\left(\boldsymbol{\gamma}\right)}}{\sum_{\boldsymbol{\gamma}\in\Omega^{\emptyset}_{o,\tilde{\gamma}_o}}e^{-\mathcal{H}\left(\boldsymbol{\gamma}\right)}}
\leq
\frac{\sum_{\boldsymbol{\gamma}\in\Omega^{A}_{o,\tilde{\gamma}_o}}e^{-\mathcal{H}\left(\boldsymbol{\gamma}\right)}}{\sum_{\boldsymbol{\gamma}\in\Omega^{A}_{o,\tilde{\gamma}_o}}e^{-\mathcal{H}\left(g_A \left(\boldsymbol{\gamma}\right)\right)}} 
\leq
\frac{\sum_{\boldsymbol{\gamma}\in\Omega^{A}_{o,\tilde{\gamma}_o}}e^{-\mathcal{H}\left(\boldsymbol{\gamma}\right)}}{\sum_{\boldsymbol{\gamma}\in\Omega^{A}_{o,\tilde{\gamma}_o}}q^{-\left|A\right|}e^{-\mathcal{H}\left(\boldsymbol{\gamma}\right)}} = q^{\left|A\right|}
\end{align*}
follows. Let $\lambda<\log\left(q/2\right)-2\alpha$. Let first
$A=\left\{ x_{1}\right\} $. We distinguish two cases: Either $x_{1}$
has a neighbour $y_{1}\neq o$ with $\gamma_{y_{1}}\neq\zeta$
or all of its neighbours $y\neq o$ satisfy $\gamma_{y}=\zeta$.
In the first case, let $g_{A}\left(\boldsymbol{\gamma}\right)$ be
obtained from $\boldsymbol{\gamma}$ by substituting $\left\{ x_{1},y_{1}\right\} $
for $\gamma_{x_{1}}=\zeta$, i.e.
\begin{equation*}
\left(g_A(\boldsymbol{\gamma})\right)_z = 
\begin{cases}
\gamma_z & \text{if }z\neq x_1, \\
\{x_1,y_1\}  & \text{if }z=x_1.
\end{cases}
\end{equation*}
Then, by the definition
of $\mathcal{H}$ and $\lambda<0$, we have $e^{-\mathcal{H}\left(g_{A}\left(\boldsymbol{\gamma}\right)\right)}\geq\frac{e^{-2\alpha-\lambda}}{2}e^{-\mathcal{H}\left(\boldsymbol{\gamma}\right)}\geq \frac{1}{q}e^{-\mathcal{H}\left(\boldsymbol{\gamma}\right)}$.
If, on the other hand, $\gamma_{y}=\zeta $ for all $o\neq y\sim x_{1}$,
pick one such neighbour $y_{1}$ of $x_{1}$ and fix a cycle $\gamma^{x_{1},y_{1}}$
of length $4$ visiting both $x_{1}$ and $y_{1}$. Let $g_{A}\left(\boldsymbol{\gamma}\right)$
be obtained from $\boldsymbol{\gamma}$ by substituting $\gamma^{x_{1},y_{1}}$
for both $\gamma_{x_{1}}$ and $\gamma_{y_{1}}$ (only varying the
starting point). Then, $e^{-\mathcal{H}\left(g_{A}\left(\boldsymbol{\gamma}\right)\right)}\geq\frac{e^{-8\alpha-4\lambda}}{16}e^{-\mathcal{H}\left(\boldsymbol{\gamma}\right)}\geq \frac{1}{q}e^{-\mathcal{H}\left(\boldsymbol{\gamma}\right)}$,
and the first step is proved.

Let now $\left|A\right|>1$ and consider
$\boldsymbol{\gamma}\in\Omega^{A}_{o,\tilde{\gamma}_o}$. Choose $x_A\in A$
such that it has a neighbour which does not lie in $A\cup\left\{o\right\}$. If one such neighbour $y_A$ of $x_A$ satisfies $\gamma_{y_{A}}\neq\zeta $,
define $h_{A}\left(\boldsymbol{\gamma}\right)$ by substituting $\left\{ x_{A},y_{A}\right\} $
for $\gamma_{x_{A}}$ as in the previous case (pick one of them arbitrarily if there are several ones). If $\gamma_{y}=\zeta $ for all neighbours $y\notin A\cup\left\{o\right\}$ of $x_A$, denote one of them by $y_A$ and
fix again a cycle $\gamma^{x_{A},y_{A}}$ of length $4$ visiting
both $x_{A}$ and $y_{A}$. Let $h_{A}\left(\boldsymbol{\gamma}\right)$
be obtained by substituting $\gamma^{x_{A},y_{A}}$ for both $\gamma_{x_{A}}$
and $\gamma_{y_{A}}$. Note that $h_{A}:\Omega^{A}_{o,\tilde{\gamma}_o}\rightarrow\Omega^{A\setminus \{x_A\}}_{o,\tilde{\gamma}_o}$
is injective and satisfies $e^{-\mathcal{H}\left(h_{A}\left(\boldsymbol{\gamma}\right)\right)}\geq\frac{1}{q}e^{-\mathcal{H}\left(\boldsymbol{\gamma}\right)}$.
The claim now follows from $g_{A}:=g_{A\setminus\left\{ x_{A}\right\} }\circ h_{A}$. 
\end{proof}

For any set $A \subset \Lambda_L$, let 
$N_A ( \boldsymbol{\gamma} ) : = \sum_{x \in \Lambda_L} \mathbbm{1} \big \{ \gamma_x = \zeta  \big  \}$ be the  number of sites
of $A$ which generate a  polygon  of length zero.
We have that

\begin{lemma}\label{prop:NA bound}
Let $c \in (0, 1)$ and $\alpha \in \mathbb{R}$ be arbitrary. Then, if $\lambda$ is small enough, there exists a constant $K_0>0$ such that for any
$L \in \mathbb{N}$, $A \subset \Lambda_L\setminus\{o\}$ and $\tilde{\gamma}_o\in {\rm SAP}_o$,
$$
\mathbb{P}^{\tilde{\gamma}_o}_{L,\alpha,\lambda} \left[   N_A \geq     c  \left| A\right|  \right] \leq K_0 \left(1 - c \right)^{\left|A\right|}.
$$
More specifically, we need $\lambda\leq\min\left\{0,\log c+\frac{\log(1-c)}{c}-1-\log 2-2\alpha\right\}$.
\end{lemma}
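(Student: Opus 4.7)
The strategy is to reduce the bound on $N_A$ to the empty-sites bound of Lemma \ref{lemma:exponential bound} by a union bound over subsets of $A$, and then to optimize the resulting binomial sum by a suitable choice of the parameter $q$.

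More concretely, let $q \in (0,1)$ be a parameter to be chosen at the end. Provided $\lambda \leq \log(q/2) - 2\alpha$, Lemma \ref{lemma:exponential bound} yields
\[
\mathbb{P}^{\tilde{\gamma}_o}_{L,\alpha,\lambda}\bigl[\gamma_x = \zeta \text{ for all } x \in B \bigr] \leq q^{|B|}
\]
for every finite $B \subset \Lambda_L \setminus \{o\}$. Writing $n = |A|$ and observing that $\{N_A \geq cn\}$ is the union, over subsets $B \subset A$ with $|B| = k \geq \lceil cn\rceil$, of the events $\{\gamma_x = \zeta \text{ for all } x \in B\}$, the union bound gives
\[
\mathbb{P}^{\tilde{\gamma}_o}_{L,\alpha,\lambda}[N_A \geq c n] \;\leq\; \sum_{k=\lceil cn\rceil}^{n} \binom{n}{k} q^{k}.
\]

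The next step is to dominate this binomial sum by a constant multiple of its first term. Since $\binom{n}{k+1}q^{k+1} / \binom{n}{k}q^k = q(n-k)/(k+1) \leq q(1-c)/c$ for $k \geq cn$, if $q$ is small enough to enforce $q(1-c)/c < 1$ the sum is bounded by a geometric series with ratio $q(1-c)/c$ and first term $\binom{n}{\lceil cn\rceil} q^{\lceil cn\rceil}$. Using the standard bound $\binom{n}{k} \leq (en/k)^{k}$ at $k = \lceil cn\rceil$ we get $\binom{n}{\lceil cn\rceil} q^{\lceil cn\rceil} \leq (eq/c)^{\lceil cn\rceil}$, and hence
\[
\mathbb{P}^{\tilde{\gamma}_o}_{L,\alpha,\lambda}[N_A \geq c n] \;\leq\; K_0 \,(eq/c)^{cn},
\]
where the constant $K_0$ comes from the geometric series and depends only on $c$ and $q$.

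Finally, choose $q$ so that $(eq/c)^{c} \leq 1 - c$; this is equivalent to $\log q \leq \log c - 1 + \log(1-c)/c$ and one checks that such a $q$ automatically satisfies $q(1-c)/c < 1$, so the geometric-series step is legitimate. Combining with the requirement $\lambda \leq \log(q/2) - 2\alpha = \log q - \log 2 - 2\alpha$ from Lemma \ref{lemma:exponential bound}, together with the condition $\lambda \leq 0$ inherited from that lemma, yields exactly the stated threshold
\[
\lambda \;\leq\; \min\!\Bigl\{0,\;\log c + \tfrac{\log(1-c)}{c} - 1 - \log 2 - 2\alpha\Bigr\}.
\]
The main technical point is the optimization of $q$: once one realises that the right balance between the entropy factor $(e/c)^{c}$ and the target decay $(1-c)$ dictates the choice $q \sim (c/e)(1-c)^{1/c}$, everything else is routine. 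No new ideas beyond Lemma \ref{lemma:exponential bound} and elementary binomial estimates are required.
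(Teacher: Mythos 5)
Your proof is correct and follows essentially the same route as the paper: reduce to Lemma \ref{lemma:exponential bound} with a union bound over subsets of $A$, bound the resulting binomial sum via $\binom{n}{k}\leq (en/k)^k$ and a geometric series, and then choose $q$ so that $(eq/c)^c\leq 1-c$ (the paper takes equality, $q=(c/e)(1-c)^{1/c}$, which reproduces the stated threshold on $\lambda$ exactly). The only difference is cosmetic: the paper bounds every term of the sum by $(eq/c)^k$ and sums that geometric series, whereas you compare consecutive terms to get ratio $q(1-c)/c$ and then apply the binomial estimate only to the leading term; both give the same constant $K_0$ depending only on $c$.
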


\begin{proof}
Let $q=c\,\exp\left(\frac{\log(1-c)}{c}-1\right)$. Then $\left(\frac{eq}{c}\right)^c = 1-c$ and $\lambda \leq \log \left(q/2\right) -2\alpha$. 
So, by Lemma \ref{lemma:exponential bound},

\begin{align*}
\mathbb{P}^{\tilde{\gamma}_o}_{L,\alpha,\lambda} \left[  N_A \geq  c \left| A \right| \right]
 & \leq    \sum\limits_{n=\left\lceil  c \left|A\right| \right\rceil}^{ \left|A\right|} \,  \binom{ |A| }{ n } q^{ n}
\leq  \sum\limits_{n=\left\lceil  c \left|A\right| \right\rceil}^{ \left|A\right|} \left(\frac{e\left|A\right|q}{c \left|A\right|}\right)^n\\
& \leq  \left(\frac{eq}{c}\right)^{\left\lceil  c \left|A\right| \right\rceil}\frac{1}{1-\frac{eq}{c}}
\leq K_0  \left[\left(\frac{eq}{c}\right)^c\right]^{\left|A\right|}
\leq  K_0 (1-c)^{|A|}.
\end{align*} 
Here, the first inequality applies the union bound and the second inequality is a consequence of the general property of the binomial coefficient that
\[
\binom{m}{n} \leq \left(\frac{\mathrm{e} m}{n}\right)^n
\]
for all $m \geq n$.
\end{proof}

\begin{proposition}\label{prop:gluing}
Let $\alpha\in\mathbb{R}$ and $c\in (0,1)$. Then, if 
$$\lambda\leq\min\left\{0,\log c+\frac{\log(1-c)}{c}-1-\log 2-2\alpha\right\},$$
there is $C_0\in (0,1)$ and there exists $m_0\in\mathbb{N}$ such that for all self-avoiding polygons $\tilde{\gamma}_o$, $\tilde{\gamma}_v$ with $\left\Vert \tilde{\gamma}_v\right\Vert>m_0$ satisfying the assumptions of Definition \ref{def:concatenate} and for any $L\in\mathbb{N}$, we have
\[
\mathbb{P}_{L,\alpha,\lambda}\left[\gamma_o=\tilde{\gamma}_o\sqcup\tilde{\gamma}_v\right] 
\geq 
C_0\, \mathbb{P}_{L,\alpha,\lambda}\left[\gamma_o=\tilde{\gamma}_o\right]\frac{e^{-(\alpha+c\lambda)\left\Vert\tilde{\gamma}_v\right\Vert}}{\left\Vert\tilde{\gamma}_v\right\Vert}.
\]
\end{proposition}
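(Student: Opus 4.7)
The strategy follows the structure of Lemma \ref{prop:gluingsmall}, but with the crucial difference that the improvement from the exponent $\alpha$ to $\alpha + c\lambda$ (analogous to the improvement $\alpha\mapsto\alpha+\lambda$ in Lemma \ref{prop:gluingsmall}) must now be generated solely by the attractive interaction $\lambda \ll 0$. Lemma \ref{prop:NA bound} forces most new vertices along $\tilde{\gamma}_v$ to host non-empty polygons, so extending $\gamma_o$ to $\tilde{\gamma}_o \sqcup \tilde{\gamma}_v$ creates one extra unit of overlap per such shared visit, each reducing the energy by $|\lambda|$.

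The first step is the trivial bijection between $\{\boldsymbol{\gamma} : \gamma_o = \tilde{\gamma}_o\}$ and $\{\boldsymbol{\gamma}' : \gamma'_o = \tilde{\gamma}_o \sqcup \tilde{\gamma}_v\}$ obtained by fixing $(\gamma_x)_{x \neq o}$. For any such paired pair one has
\[
\frac{e^{-\mathcal{H}(\boldsymbol{\gamma}')}}{e^{-\mathcal{H}(\boldsymbol{\gamma})}} \;=\; e^{-\alpha(\|\tilde{\gamma}_o \sqcup \tilde{\gamma}_v\| - \|\tilde{\gamma}_o\|)}\,\frac{\|\tilde{\gamma}_o\|}{\|\tilde{\gamma}_o \sqcup \tilde{\gamma}_v\|}\,e^{-\lambda \Delta_I(\boldsymbol{\gamma})},
\]
where $\Delta_I(\boldsymbol{\gamma}) := \mathcal{I}(\boldsymbol{\gamma}') - \mathcal{I}(\boldsymbol{\gamma})$. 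Setting $V_+ := \mathcal{V}(\tilde{\gamma}_o \sqcup \tilde{\gamma}_v) \setminus \mathcal{V}(\tilde{\gamma}_o) \subset \Lambda_L \setminus \{o\}$ and using the implication $\gamma_x \neq \zeta \Rightarrow x \in \gamma_x$, a direct inspection of $\mathcal{I}$---together with the observation that at most one vertex of $\tilde{\gamma}_o$ disappears under $\sqcup$ (cf. Definition \ref{def:concatenate})---yields the pointwise lower bound
\[
\Delta_I(\boldsymbol{\gamma}) \;\geq\; |V_+| - N_{V_+}(\boldsymbol{\gamma}) - 1,
\]
with $N_{V_+}$ exactly as in the statement of Lemma \ref{prop:NA bound}.

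The key step is then to invoke Lemma \ref{prop:NA bound} with $A = V_+$ and the parameter tuned to the prescribed rate $c$: the hypothesis of the present proposition is precisely what that lemma requires, yielding a tail estimate $\mathbb{P}^{\tilde{\gamma}_o}_{L,\alpha,\lambda}[\Delta_I < c\|\tilde{\gamma}_v\|] = O(\delta^{|V_+|})$ for some $\delta < 1$. Because Definition \ref{def:concatenate} gives $|V_+| \geq \|\tilde{\gamma}_v\| - 3$, and since $-\lambda > 0$ makes $x \mapsto e^{-\lambda x}$ increasing, this produces
\[
\mathbb{E}^{\tilde{\gamma}_o}_{L,\alpha,\lambda}\bigl[e^{-\lambda \Delta_I}\bigr] \;\geq\; e^{-c\lambda \|\tilde{\gamma}_v\|}\bigl(1 - O(\delta^{\|\tilde{\gamma}_v\|})\bigr) \;\geq\; C_1\, e^{-c\lambda \|\tilde{\gamma}_v\|}
\]
for $\|\tilde{\gamma}_v\| > m_0$ sufficiently large.

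Finally, combining the bijection identity
\[
\mathbb{P}_{L,\alpha,\lambda}[\gamma_o = \tilde{\gamma}_o \sqcup \tilde{\gamma}_v] \;=\; \mathbb{P}_{L,\alpha,\lambda}[\gamma_o = \tilde{\gamma}_o]\cdot \mathbb{E}^{\tilde{\gamma}_o}_{L,\alpha,\lambda}\!\left[\frac{e^{-\mathcal{H}(\boldsymbol{\gamma}')}}{e^{-\mathcal{H}(\boldsymbol{\gamma})}}\right]
\]
with the elementary bounds $\|\tilde{\gamma}_o \sqcup \tilde{\gamma}_v\| - \|\tilde{\gamma}_o\| \leq \|\tilde{\gamma}_v\|$ and $\frac{\|\tilde{\gamma}_o\|}{\|\tilde{\gamma}_o \sqcup \tilde{\gamma}_v\|} \geq \frac{1}{\|\tilde{\gamma}_v\|}$ (valid once both lengths are $\geq 2$) yields the claimed inequality with a constant $C_0 \in (0,1)$ absorbing the prefactors and depending only on $\alpha, \lambda, c$. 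The main technical obstacle is the careful bookkeeping for $\Delta_I$: one must track the two- or three-vertex intersection regimes of Definition \ref{def:concatenate} and absorb the resulting $O(1)$ corrections into $C_0$ without degrading the exponential rate, which is precisely what forces the threshold $\|\tilde{\gamma}_v\| > m_0$.
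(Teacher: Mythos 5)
Your argument is correct and is essentially the paper's own proof: the paper likewise works on the event that at most a fixed fraction of the new vertices $A=\mathcal{V}(\tilde{\gamma}_v)\setminus\mathcal{V}(\tilde{\gamma}_o)$ carry empty polygons, gains a factor $e^{-\lambda}$ per occupied new vertex, controls that event with Lemma \ref{prop:NA bound} under the conditional measure $\mathbb{P}^{\tilde{\gamma}_o}_{L,\alpha,\lambda}$, and absorbs the $O(1)$ corrections from Definition \ref{def:concatenate} into $C_0$ and the threshold $m_0$; your reformulation via the conditional expectation of the weight ratio is only cosmetically different from the paper's restriction to the good event. One caution: to obtain the rate $c\lambda$ in the exponent you must apply Lemma \ref{prop:NA bound} at threshold $1-c$ (so that $N_A<(1-c)\vert A\vert$ holds with high probability), whose hypothesis reads $\lambda\leq\log(1-c)+\frac{\log c}{1-c}-1-\log 2-2\alpha$, whereas the displayed hypothesis (threshold $c$) only yields the rate $(1-c)\lambda$ -- a $c\leftrightarrow 1-c$ bookkeeping slip that the paper's own proof makes in exactly the same way and which is immaterial for the application in Lemma \ref{lem:final lemma}, where one optimizes over $c\in(0,1)$.
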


\begin{proof}
Let $A=\mathcal{V}(\tilde{\gamma}_v)\setminus \mathcal{V}(\tilde{\gamma}_o)$. For any $c\in(0,1)$ and $\lambda<0$, we have that
\begin{align*}
\mathbb{P}_{L,\alpha,\lambda}\left[\gamma_o=\tilde{\gamma}_o\sqcup\tilde{\gamma}_v\right]
&\geq   \mathbb{P}_{L,\alpha,\lambda}\left[\gamma_o=\tilde{\gamma}_o\sqcup\tilde{\gamma}_v,N_A<(1-c)\left\vert A\right\vert\right] \\
&\geq 
(e^{2\alpha}\wedge e^{3\alpha})e^\lambda\,\mathbb{P}_{L,\alpha,\lambda}\left[\gamma_o=\tilde{\gamma}_o,N_A<(1-c)\left\vert A\right\vert\right]
\frac{\left\Vert\tilde{\gamma}_o\right\Vert}{\left\Vert\tilde{\gamma}_o\right\Vert+\left\Vert\tilde{\gamma}_v\right\Vert-2}e^{-(\alpha+c\lambda)\,\left\Vert\tilde{\gamma}_v\right\Vert}\\
&\geq
(e^{2\alpha}\wedge e^{3\alpha})e^\lambda\,
\mathbb{P}_{L,\alpha,\lambda}\left[\gamma_o=\tilde{\gamma}_o\right] \,
\mathbb{P}_{L,\alpha,\lambda}^{\tilde{\gamma}_o}\left[N_A<(1-c)\left\vert A\right\vert\right]
\frac{e^{-\left(\alpha+c\,\lambda\right)\left\Vert \tilde{\gamma}_v\right\Vert}}{\left\Vert\tilde{\gamma}_v\right\Vert}.
\end{align*}
If $\lambda\leq -2\alpha+\log c+\frac{\log(1-c)}{c}-1-\log 2$, we may apply Lemma \ref{prop:NA bound} so that
\begin{align*}
\mathbb{P}_{L,\alpha,\lambda}^{\tilde{\gamma}_o}\left[N_A<(1-c)\left\vert A\right\vert\right]
&=1-\mathbb{P}^{\tilde{\gamma}_o}_{L,\alpha,\lambda} \left[   N_A \geq    (1- c)  \left\vert A\right\vert  \right] 
\geq 1-K_0 c^{\left\vert A\right\vert}.
\end{align*}
Now fix $C_0\in (0,1)$ such that $\frac{C_0} {(e^{2\alpha}\wedge e^{3\alpha})e^\lambda}<1$ and choose $m_0\in\mathbb{N}$ such that for all $\left\vert A\right\vert>m_0-2$, we have
$$K_0  c^{\left\vert A\right\vert} \leq 1 - \frac{C_0} {(e^{2\alpha}\wedge e^{3\alpha})e^\lambda}.$$
Then the claim follows.
\end{proof}

The case $\alpha \geq \log \mu$ of Theorem \ref{theo:weakly space filling} is now
completed by 
\begin{lemma} \label{lem:final lemma}
	Let $\alpha \geq \log \mu$ and 
\begin{align*}
\lambda&<0\wedge\sup_{c\in(0,1)}\left\{\left(\frac{\log \mu-\alpha}{c}\right)\wedge\left(-2\alpha+\log c+\frac{\log(1-c)}{c}-1-\log 2\right)\right\}
=:\lambda_{sf}.
\end{align*}
Then there exists $x > 1/\mu$ and $C>0$ so that \eqref{eq:assumption} holds.	
\end{lemma}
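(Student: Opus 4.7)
The plan is to reduce inequality \eqref{eq:assumption} to a direct application of Proposition \ref{prop:gluing}, using the parameter $c$ as a free dial that we tune via the hypothesis on $\lambda$. Given $\alpha\ge\log\mu$ and $\lambda<\lambda_{sf}$, the definition of $\lambda_{sf}$ as a supremum guarantees the existence of $c\in(0,1)$ such that simultaneously
\[
\lambda<\frac{\log\mu-\alpha}{c}\quad\text{and}\quad \lambda\le -2\alpha+\log c+\tfrac{\log(1-c)}{c}-1-\log 2,
\]
and also $\lambda<0$. The first inequality rearranges (using $c>0$) to $\alpha+c\lambda<\log\mu$, so $x:=e^{-(\alpha+c\lambda)}$ satisfies $x>1/\mu$, which is the value we will feed into \eqref{eq:assumption}. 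The second inequality is exactly the hypothesis that allows us to invoke Proposition \ref{prop:gluing} with this $c$.

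Next, for given $F\in\mathcal F(\Lambda_L,m)$, $\tilde\gamma_o\in\Theta_F$, and $\tilde\gamma_F\in S_F$, I set $\tilde\gamma_v:=\tilde\gamma_F\sqcup l(\tilde\gamma_o)$ as in the definition of $f$. Since any $\tilde\gamma_F\in S_F$ must visit every external cardinal edge of $F$, its length is bounded below by a quantity tending to infinity with $m$, so in particular for $m$ large enough one has $\|\tilde\gamma_v\|>m_0$ uniformly in the choices, and Proposition \ref{prop:gluing} applies and yields
\[
\mathbb{P}_{L,\alpha,\lambda}\bigl[\gamma_o=f(\tilde\gamma_o,\tilde\gamma_F)\bigr]
\ge C_0\,\mathbb{P}_{L,\alpha,\lambda}[\gamma_o=\tilde\gamma_o]\,\frac{x^{\|\tilde\gamma_v\|}}{\|\tilde\gamma_v\|}.
\]

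Finally I pass from $\|\tilde\gamma_v\|$ to $\|\tilde\gamma_F\|$ using the properties of the linking polygon $l(\tilde\gamma_o)$: by Definition \ref{def:concatenate} one has $\|\tilde\gamma_v\|=\|\tilde\gamma_F\|+\|l(\tilde\gamma_o)\|-r$ with $r\in\{2,4\}$, and $\|l(\tilde\gamma_o)\|\le 100m$. Hence $x^{\|\tilde\gamma_v\|}=x^{\|l(\tilde\gamma_o)\|-r}\cdot x^{\|\tilde\gamma_F\|}$, with the first factor bounded below by a positive constant depending only on $x$ and $m$; similarly $\|\tilde\gamma_v\|\le\|\tilde\gamma_F\|+100m$ gives $1/\|\tilde\gamma_v\|\ge c'(m)/\|\tilde\gamma_F\|$ once $m$ is fixed (again using that $\|\tilde\gamma_F\|$ is bounded below by a constant growing with $m$). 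Absorbing these $m$-dependent constants into $C$ produces \eqref{eq:assumption} with $x>1/\mu$, exactly as needed.

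The main conceptual obstacle is the first step: verifying that the somewhat opaque expression defining $\lambda_{sf}$ really does produce a single $c$ satisfying both constraints simultaneously. This is merely an unwinding of the supremum of a min, and the hypothesis has been crafted precisely so that the two competing constraints on $c$ — ensuring $x>1/\mu$ on the one hand and ensuring the validity of the Lemma \ref{prop:NA bound} estimate used inside Proposition \ref{prop:gluing} on the other — can be met jointly. The rest of the argument consists only of bookkeeping to convert $\|\tilde\gamma_v\|$ into $\|\tilde\gamma_F\|$ at the cost of harmless $m$-dependent multiplicative constants.
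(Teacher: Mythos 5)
Your proof is correct and follows essentially the same route as the paper: extract $c\in(0,1)$ from the definition of $\lambda_{sf}$ so that both $\alpha+c\lambda<\log\mu$ and the hypothesis of Proposition~\ref{prop:gluing} hold, set $x=e^{-(\alpha+c\lambda)}>1/\mu$, apply Proposition~\ref{prop:gluing} for $m$ large, then absorb the $l(\tilde\gamma_o)$-dependent factors into $C$. You have in fact filled in two details the paper compresses — the explicit bookkeeping converting $\|\tilde\gamma_v\|$ to $\|\tilde\gamma_F\|$, and the correct direction of the inequality $e^{-(\alpha+c\lambda)}>x$, which the paper's text writes with a typo as ``$\exp(\alpha+c\lambda)>x$''.
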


\begin{proof}
Let $F \in \caF(\Lambda_L,m)$ for sufficiently large $m$, 
$\tilde{\gamma}_o\in\Theta_F$ and $\tilde{\gamma}_F\in S_F$. Let again $\tilde{\gamma}_v$ be such that $f(\tilde{\gamma}_o,\tilde{\gamma}_F)=\tilde{\gamma}_o\sqcup\tilde{\gamma}_v$. 
Then there are $c\in (0,1)$ and $x>1/\mu$ such that $\exp\left(\alpha +c\lambda\right)>x$. Moreover, we can apply Proposition \ref{prop:gluing} if $m$ is large enough, and we obtain for some $C_0 >0$ that
\begin{align*}
\mathbb{P}_{L,\alpha,\lambda}\left[\gamma_o=f\left(\tilde{\gamma}_o,\tilde{\gamma}_F\right)\right] 
& \geq 
C_0\, \mathbb{P}_{L,\alpha,\lambda}\left[\gamma_o=\tilde{\gamma}_o\right]\frac{x^{\left\Vert\tilde{\gamma}_v\right\Vert}}{\left\Vert\tilde{\gamma}_v\right\Vert}\\
&\geq 
C\, \mathbb{P}_{L,\alpha,\lambda}\left[\gamma_o=\tilde{\gamma}_o\right] \frac{x^{\left\Vert\tilde{\gamma}_F\right\Vert}}{\left\Vert\tilde{\gamma}_F\right\Vert},
\end{align*}
where the last step applies the properties of $l(\tilde{\gamma}_o)$.
The claim is thus proved.
\end{proof}

\section{Appendix}
Here we show how linearity of the energy and an application of H\"older's inequality leads to 
convexity of the pressure.

\begin{lemma}
For any $c \in (0, 1)$, $\rho_1, \rho_2, \nu_1, \nu_2 \in \mathbb{R}$,
$$
\tilde\Phi_L \big ( c \, \rho_1 + (1-c) \rho_2,  c \nu_1 + ( 1-c) \nu_2 \big ) \,  \leq  \, 
c \, \tilde\Phi_L \big ( \rho_1 , \nu_1 \big ) \, \, + \, \, (1-c) \, \tilde\Phi_L \big ( \, \rho_2 ,  \nu_2 \,  \big ) .
$$
\end{lemma}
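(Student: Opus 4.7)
The plan is a completely standard application of Hölder's inequality, as advertised in the statement of the appendix. The crucial preliminary observation is that the Hamiltonian $\tilde\caH_{L,\rho,\nu}(\bsgamma)$, viewed as a function of $(\rho,\nu)$ for fixed $\bsgamma$, is affine: the term $\rho\caC(\bsgamma)$ is linear in $\rho$, the term $\nu\caN(\bsgamma)$ is linear in $\nu$, and the entropic correction $\sum_{x:\|\gamma_x\|>0}\log\|\gamma_x\|$ does not depend on $(\rho,\nu)$ at all. In particular, for any $c\in(0,1)$ and any $\bsgamma\in\Omega$,
\[
\tilde\caH_{L,\,c\rho_1+(1-c)\rho_2,\,c\nu_1+(1-c)\nu_2}(\bsgamma)
= c\,\tilde\caH_{L,\rho_1,\nu_1}(\bsgamma) + (1-c)\,\tilde\caH_{L,\rho_2,\nu_2}(\bsgamma),
\]
because the log term equals $c\log\|\gamma_x\|+(1-c)\log\|\gamma_x\|=\log\|\gamma_x\|$.

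The second step is to apply Hölder with exponents $p=1/c$ and $q=1/(1-c)$ to the defining sum of the partition function:
\[
\tilde Z_{L,\,c\rho_1+(1-c)\rho_2,\,c\nu_1+(1-c)\nu_2}
= \sum_{\bsgamma}\bigl(e^{-\tilde\caH_{L,\rho_1,\nu_1}(\bsgamma)}\bigr)^{c}\bigl(e^{-\tilde\caH_{L,\rho_2,\nu_2}(\bsgamma)}\bigr)^{1-c}
\leq \tilde Z_{L,\rho_1,\nu_1}^{\,c}\;\tilde Z_{L,\rho_2,\nu_2}^{\,1-c}.
\]
Taking the logarithm, dividing by $|\Lambda_L|$, and invoking the definition of $\tilde\Phi_L$ yields exactly the claimed inequality.

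There is no real obstacle here; the only thing to be careful about is verifying that the log term in the Hamiltonian is genuinely $(\rho,\nu)$-independent (so that the affine decomposition above goes through without an extra factor), which is immediate from \eqref{main prob meas}. The argument works verbatim for any family of Hamiltonians depending affinely on a parameter, which is the standard way to obtain convexity of the finite-volume pressure.
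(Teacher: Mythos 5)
Your argument is correct and matches the paper's proof step for step: observe that $\tilde\caH_{L,\rho,\nu}$ is affine in $(\rho,\nu)$ for fixed $\bsgamma$ (the log term being independent of the parameters), apply H\"older's inequality with exponents $p=1/c$ and $q=1/(1-c)$ to the partition-function sum, then take logarithms and divide by $|\Lambda_L|$. No gaps.
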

\begin{proof}
Note first that for any $\boldsymbol{\gamma} \in \Omega$,
$$
\tilde{\mathcal{H}}_{L,c \, \rho_1 + (1-c) \rho_2,  c \nu_1 + ( 1-c) \nu_2}  ( \boldsymbol{\gamma} ) =
c \, \tilde{\mathcal{H}}_{L,\rho_1,   \nu_1 }  \big ( \boldsymbol{\gamma} \big )  + 
(1-c) \,  \tilde{\mathcal{H}}_{L,\rho_2,    \nu_2 }  \big ( \boldsymbol{\gamma}  \big ).
$$
Thus we obtain that,
\begin{align*}
\tilde{Z}_{L,c \, \rho_1 + (1-c) \rho_2,  c \nu_1 + ( 1-c) \nu_2} & =  \sum\limits_{ \boldsymbol{\gamma} \in \Omega} e^{ - \tilde{\mathcal{H}}_{L,c \, \rho_1 + (1-c) \rho_2,  c \nu_1 + ( 1-c) \nu_2}  ( \boldsymbol{\gamma} ) } \\
& = \sum\limits_{ \boldsymbol{\gamma} \in \Omega}   e^{- c \, \tilde{\mathcal{H}}_{L,\rho_1,   \nu_1 }  \big ( \boldsymbol{\gamma} \big ) }
\, \, e^{ - (1-c) \,  \tilde{\mathcal{H}}_{L,\rho_2,    \nu_2 }  \big ( \boldsymbol{\gamma}  \big ) }  \\ 
& \leq  
 \left( 
 \sum\limits_{ \boldsymbol{\gamma}^1 \in \Omega}  
  e^{ -   \tilde{\mathcal{H}}_{L,\rho_1,   \nu_1 }  ( \boldsymbol{\gamma}^1 )  } 
  \right)^{  c  }
  \left( 
  \sum\limits_{ \boldsymbol{\gamma}^2 \in \Omega}  
 e^{ -   \tilde{\mathcal{H}}_{L,\rho_2,    \nu_2 }  ( \boldsymbol{\gamma}^2 )  } 
 \right)^{1-c  } \\
 & = \Big ( \tilde{Z}_{L, \rho_1,   \nu_1}  \Big )^c
 \Big ( \tilde{Z}_{L, \rho_2,  \nu_2} \Big )^{1-c},
\end{align*}
 where in the third  step we used H\"older's inequality with parameters $p=\frac{1}{c}$ and $q= \frac{1}{1-c}$,
 which satisfy $\frac{1}{p} + \frac{1}{q} =1$, and in the last step we used the definition of partition functions.
 Then, by applying the previous inequality to the definition of $\tilde{\Phi}_L$ and by using
 the properties of the logarithm, we conclude the proof.
\end{proof}

\section*{Acknowledgements}
H.S. acknowledges support by Deutsche Telekom Stiftung. L.T. acknowledges support by German Research Foundation (DFG), grant number: BE 5267/1.


\end{document}